\titleformat{\subsection}[runin]
  {\normalfont\large\bfseries}{\thesubsection}{1em}{}
\titleformat{\section}[runin]
  {\normalfont\large\bfseries}{\thesection}{1em}{}
\newtheorem*{remark}{Remark}
\newtheorem{theorem}{Theorem}[section]
\newtheorem{lemma}[theorem]{Lemma}
\numberwithin{equation}{section}
\title{Well-posedness of a novel Lagrange multiplier formulation for fluid-poroelastic interaction}
\author{Amy de Castro
\thanks{Department of Mathematics, University of Utah, Salt Lake City, UT 84123
    ({\tt amy.de.castro@utah.edu}). Partially supported by the NSF under grant number DMS-2207971.}
\and
Hyesuk Lee
  \thanks{School of Mathematical and Statistical Sciences, Clemson
        University, Clemson, SC  29634-0975 ({\tt hklee@clemson.edu}).
    Partially supported by the NSF under grant number DMS-2207971.}
}
\date{}
\begin{document}

\maketitle
\vspace{-10mm}
\begin{abstract}
 We introduce a novel monolithic formulation that employs Lagrange multipliers (LMs) to couple a fluid flow governed by the time-dependent Stokes equations with a poroelastic structure described by the Biot equations. The formulation is developed in detail, and we establish the well-posedness of both the semi-discrete and fully discrete saddle point problems. We further prove the stability of the fully discrete system. This saddle point formulation, which utilizes three LMs, is designed to enable a partitioned approach that completely decouples the Stokes and Biot subdomains, and this approach will be explored in a subsequent work.
\end{abstract}

\section{Background and Research Goals}
In the context of a linear elastic structure, the structural deformation is the main process at work. Poroelasticity, on the other hand, describes fluid flow through a deformable porous medium, typically governed by Darcy’s law. 
Poroelastic materials are modeled by the Biot equations, which capture the interaction between the deformation of an elastic structural skeleton and the motion of a saturating fluid. 
These models have important applications in geoscience, such as groundwater flow and flow through fractured rock formations  \cite{Detournay_1993, Murad_2001}, as well as in biomedical fields including arterial and vascular blood flow and drug delivery processes transport \cite{Banks_2017, Bociu_2021, Calo_2008, Causin_2014}. 

Analysis and numerical methods for the Biot model can be categorized broadly by how many variables they include and the dynamics under consideration. The most widely used formulations are two- or three-field models, although more complex four- or five-field models have also been proposed \cite{Caucao_2022, Kumar_2020}. In the two-field model, the primary variables are structural deformation and pore pressure, while the three-field model includes the fluid velocity. 
Depending on the application, the model may account for structural acceleration to capture fully dynamic behavior, or neglect it in quasistatic approximations. In this work, we adopt the fully dynamic two-field Biot model for coupling,
but provide a brief overview of existing methods and analysis for a range of fluid-poroelastic structure interaction (FPSI) models.

A semigroup approach has been used to analyze existence of strong and weak solutions for the fully inertial two-field Biot system coupled with incompressible Stokes equations \cite{Avalos_2024} and the three-field Biot model coupled with the compressible Stokes equations \cite{Showalter_2005}. 
Other analytic results for FPSI systems include well-posedness for a nonlinear coupling between a non-Newtonian fluid and the quasi-static Biot model \cite{Ambartsumyan_2019}.
The existence of weak solutions for the incompressible Navier-Stokes equations with a two-field Biot model under a small data assumption is proven in \cite{Cesmelioglu_2017}, although the small data assumption is only required due to the nonlinearity of the Navier-Stokes equations. 
Motivated by biological applications such as vascular stents, Stokes flow may be coupled to a multilayered Biot model comprised of a thin poroelastic plate and a thick Biot layer \cite{Bociu_2021}. Well-posedness results have been shown for the linear, fully dynamic scenario and for the nonlinear, quasistatic model through Rothe's method, solving a series of elliptic, semidiscrete in time problems.

We consider the coupling of the fully dynamic two-field Biot equations with the dynamic Stokes equations, using three LMs to enforce interface conditions between the subdomains. 
Instead of reformulating the Biot equations as a first-order system in time by introducing structural velocity as an additional variable, we retain the original second order in time formulation. 
Our goal is to formulate the coupled problem as a saddle point system in a form suitable for domain decomposition, providing a foundation for future work to develop a non-iterative partitioned method based on this formulation.
While some existing FPSI formulations that use LMs introduce only a single multiplier to represent the pore pressure on the interface 
\cite{Ambartsumyan_2019, Ambartsumyan_2018}, and others include additional multipliers for structure or fluid velocity 
\cite{Li_2022_Hydro, Caucao_2022},  our approach defines the multipliers differently. 
As our LMs are defined differently, they result in a unique saddle point formulation which will suggest a partitioned technique for the solution of the coupled system.

A study has been reported on reducing the size of the discrete system of the monolithic formulation using LMs within the framework of FPSI.
In \cite{Caucao_2022}, three LMs representing restrictions of the fluid velocity, structure velocity, and pore pressure to the interface are utilized. 
The formulation employs a stress–velocity–vorticity approach for the Stokes equations, a velocity–pressure formulation for Darcy flow, and a stress–displacement–rotation model for elasticity. 
A vertex quadrature rule is presented which allows for a reduction of the size of the monolithic system to be solved - from eleven variables down to six, 
as five of the variables' DOFs can be decoupled from the rest by virtue of the quadrature rule. 
The reduced matrix formulation is solved as a monolithic system, with the Darcy pressure, structural and Stokes velocities, and three LMs remaining. This decoupling, however, is not of the Stokes and Biot subdomains themselves, as one must still solve a system involving variables from both subdomains.

The formulation we present of the FPSI system  will allow for the decoupling of the physical subdomain systems through the solution of an interface problem. We focus on deriving and proving well-posedness and stability of the formulation, with the aim of developing a foundation for a truly partitioned method in which the Stokes and Biot subdomains may be entirely decoupled from each other. 
The use of LMs leads to a saddle point problem that facilitates domain decomposition along the physical interface, allowing parallel computation of local problems. This contrasts with earlier approaches \cite{Ambartsumyan_2018, Li_2022_Hydro, Wang_2025} which address a fully monolithic system coupling variables from both subdomains or solve local problems sequentially.

The paper is outlined as follows. We discuss the model equations in Section \ref{FPSI_FEM:sec:Continuous Model-3LMs} and present our weak form and saddle point structure, leading to a well-posed semi-discrete formulation. Turning to the fully discrete model, we demonstrate well-posedness and give stability results in Sections \ref{FPSI_FEM:sec:FullyDiscreteModelUpdates} and \ref{FPSI_FEM:sec:stability}. Lastly, we offer conclusions and expound on our future work of developing a partitioned method based on this formulation in Section \ref{FPSI_joint:sec:Conclusion}. 




	\section{Model Equations and Semi-Discrete Model }\label{FPSI_FEM:sec:Continuous Model-3LMs}
We begin by developing the weak form and semi-discrete monolithic formulation of the Stokes-Biot system. Consider a Lipschitz domain $\Omega$ which is divided into two open regions: $\Omega_f \in \mathbb{R}^d$ containing the fluid, and $\Omega_p \in \mathbb{R}^d$ containing the poroelastic structure, for $d=2,3$. We assume these domains are non-overlapping and share an interface $\gamma$. 
As the problem is time dependent, we take $T>0$ to be a given final time.

The flow in the fluid domain is modeled by the transient Stokes equations, and the two-field Biot equations describe the poroelastic material in $\Omega_p$. The resulting unknown functions are the fluid velocity $\bm{u}(\bm{x},t)$, the fluid pressure $p_f(\bm{x},t)$, the structural displacement $\bm{\eta}(\bm{x},t)$, and the pore pressure $p_p(\bm{x},t)$. With given body forces $\bm{f}_f, \bm{f}_\eta,$ and source or sink $f_p$, the model problem reads:

\noindent \textit{Find  $\bm{u} \in \Omega_f \times (0,T] \mapsto \mathbb{R}^d, \hspace{1mm} p_f \in \Omega_f \times (0,T] \mapsto \mathbb{R}, \hspace{1mm} \bm{\eta} \in \Omega_p \times (0,T] \mapsto \mathbb{R}^d, p_p \in \Omega_p \times (0,T] \mapsto \mathbb{R} $ s.t.} 
\begin{align}
\rho_f  \frac{\partial \bm{u}}{\partial t} - 2 \nu_f \nabla \cdot D(\bm{u}) + \nabla p_f &= \bm{f}_f \hspace{5mm} \text{ in }\Omega_f \times (0,T] \label{FPSI_FEM:StokesMom} \\
\nabla \cdot \bm{u} &= 0 \hspace{8mm} \text{ in }\Omega_f \times (0,T]  \label{FPSI_FEM:StokesMass} \\
\rho_p \frac{\partial^2 \bm{\eta}}{\partial t^2} - 2\nu_p \nabla \cdot D(\bm{\eta}) - \lambda \nabla (\nabla \cdot \bm{\eta}) + \alpha \nabla p_p &= \bm{f}_\eta \hspace{6mm} \text{ in }\Omega_p \times (0,T] \label{FPSI_FEM:Structure1} \\
s_0 \frac{\partial p_p}{\partial t} + \alpha \nabla \cdot \frac{\partial \eta}{\partial t} - \nabla \cdot \kappa \nabla p_p &= f_p \hspace{7mm} \text{ in } \Omega_p \times (0,T]. \label{FPSI_FEM:Structure2}
\end{align}
 Above, $D(\cdot)$ is the deformation rate tensor, defined as $D(\bm{v}) := \frac{1}{2}(\nabla \bm{v} + (\nabla \bm{v})^T)$. Densities are denoted by $\rho_f, \rho_p$, fluid viscosity by $\nu_f$, Lam\'e parameters by $\nu_p, \lambda$, and the Biot-Willis constant by $\alpha$. The constrained specific storage coefficient is denoted by $s_0$, and $\kappa$ represents the hydraulic conductivity. Although in general $\kappa$ is a tensor, we simplify here by considering an isotropic porous material so that $\kappa$ becomes a scalar. Each parameter is assumed to be constant in time for our analysis.  

Initial conditions are provided for $\bm{u}, \bm{\eta}$, and $p_p$. With stress tensors $\bm{\sigma}_f := 2\nu_f D(\bm{u}) - p_f I$ and $\bm{\sigma}_p := 2\nu_p D(\bm{\eta}) + \lambda (\nabla \cdot \bm{\eta})I - \alpha p_p I$, boundary data is given as
\begin{align}
\begin{split}\label{FPSI_FEM:noninterBCs}
\bm{\sigma}_f  \bm{n}_f &= \bm{u}_N \hspace{5mm} \text{ on } \Gamma_N^f  \times (0,T], \hspace{8mm}
\bm{u} = \bm{0} \hspace{5mm} \text{ on } \Gamma_D^f \times (0,T] \\
\bm{\sigma}_p  \bm{n}_p &= \bm{\eta}_N \hspace{5mm} \text{ on } \widetilde{\Gamma}_N^p \times (0,T], \hspace{8mm}
\bm{\eta} = \bm{0} \hspace{5mm} \text{ on } \widetilde{\Gamma}_D^p \times (0,T] \\
(\kappa \nabla p_p) \cdot \bm{n}_p &= p_N \hspace{5mm} \text{ on } \Gamma_N^{p} \times (0,T], \hspace{8mm} p_p = 0 \hspace{5mm} \text{ on } \Gamma_D^p \times (0,T],
\end{split}
\end{align}
where $\Gamma^f = \Gamma^f_N \cup \Gamma^f_D \cup \gamma$ is the Lipschitz continuous boundary of $\Omega_f$. Likewise, the boundary $\Gamma^p$ of $\Omega_p$ may be written as $\Gamma^p = \Gamma^p_N \cup \Gamma^p_D \cup \gamma$ = $\widetilde{\Gamma}^p_N \cup \widetilde{\Gamma}^p_D \cup \gamma$. We employ two notations in order to allow different types of boundary conditions to be defined for the displacement and pore pressure along the same spatially coincident portion of $\Gamma^p$. In each domain $\Omega_k$, $k \in \{f,p\}$, we assume the measure of $\Gamma^k_N$ and $\Gamma^k_D$ are nonzero and take the unit vectors $\bm{n}_k$ to be outward normal to the domains. The unit vector $\bm{\tau}_\gamma$ is tangential to the interface $\gamma$.

\noindent We complete the system by providing the following interface conditions representing mass conservation, balance of stresses, and the Beavers-Joseph-Saffman (BJS) condition, where $\beta$ is the resistance parameter in the tangential direction:
\begin{align}
\bm{u} \cdot \bm{n}_f &= -\left(\frac{\partial \bm{\eta}}{\partial t} - \kappa \nabla p_p\right) \cdot \bm{n}_p \hspace{35mm} \text{ on } \gamma \times (0,T] \label{FPSI_FEM:Inter1} \\
\bm{\sigma}_f \bm{n}_f &= -\bm{\sigma}_p \bm{n}_p \hspace{57mm} \text{ on } \gamma \times (0,T] \label{FPSI_FEM:Inter2} \\
\bm{n}_f\cdot \bm{\sigma}_f \bm{n}_f &= -p_p \hspace{62mm} \text{ on } \gamma \times (0,T] \label{FPSI_FEM:Inter3} \\
\bm{n}_f\cdot \bm{\sigma}_f \bm{\tau}_\gamma^\ell &= -\beta \left(\bm{u} - \frac{\partial \bm{\eta}}{\partial t}\right) \cdot \bm{\tau}_\gamma^\ell \hspace{5mm} \text{ for } 1 \leq \ell \leq d-1, \hspace{5mm} \text{ on } \gamma \times (0,T]. \label{FPSI_FEM:Inter4}
\end{align}

\subsection{Derivation of Weak Form}
Define the following continuous spaces:
\begin{align}\label{FPSI_FEM:ContSpaces}
\begin{split}
&U :=\{\bm{v} \in \bm{H}^1(\Omega_f): \bm{v} = \bm{0} \text{ on } \Gamma_D^f\}, \hspace{5mm}
X := \{\bm{\varphi} \in \bm{H}^1(\Omega_p): \bm{\varphi} = \bm{0} \text{ on } \widetilde{\Gamma}_D^p\} \\
&Q_f :=L^2(\Omega_f), \hspace{38mm} Q_p := \{q_p \in H^1(\Omega_p) : q_p = 0 \text{ on } \Gamma_D^p\}.
\end{split}
\end{align}

Throughout this paper, we use boldface font to distinguish a vector-valued function, such as $\bm{u},\bm{\eta}$, from a scalar-valued function such as $p_p, p_f$. Likewise, function spaces will be typeset in bold to indicate their correspondence to a vector-valued function.
Let $H^s(\Omega_i)$ be the Hilbert space of order $s$ defined on subdomain $\Omega_i$, $i \in \{ f,p\}$. The notation $(\cdot,\cdot) = (\cdot,\cdot)_{\Omega_i}$ represents the $L^2$ inner products on $\Omega_i$. We may drop the subscript $\Omega_i$ from the inner product or norm notation if it is clear from context. A duality product between $H^s$ and its dual space for $s>0$ is denoted by $\langle \cdot,\cdot \rangle_{\Omega_i}$.  We define the $\bm{H}^1$ norm for vector-valued functions $\bm{v} \in \bm{H}^1(\Omega_i)$ as $||\bm{v}||_{1,\Omega_i}^2 := ||\bm{v}||_{0,\Omega_i}^2 + ||D(\bm{v})||_{0,\Omega_i}^2$, with  $||w||_{1,\Omega_i}^2 := ||w||_{0,\Omega_i}^2 + ||\nabla w||_{0,\Omega_i}^2$ the corresponding norm for scalar-valued functions $w \in H^1(\Omega_i)$. Likewise along a portion of the boundary $\Gamma$, we take $(\cdot,\cdot)_\Gamma$ to be the $L^2$ inner product, and $\langle \cdot,\cdot \rangle_{\Gamma}$ to represent a dual product.

In $\mathbb{R}^2$, we introduce three Lagrange multipliers (LMs) for quantities on the interface, $g_1 \in \Lambda_{g1} := H^{-1/2}(\gamma)$, $g_2 \in \Lambda_{g2} := L^2(\gamma)$, and $\lambda_p \in \Lambda_\lambda := H^{1/2}(\gamma)$, defined on $\gamma \times (0,T]$ by

\begin{equation*}
g_1 := (\bm{\sigma}_f \bm{n}_f)\cdot \bm{n}_f, \quad g_2 := (\bm{\sigma}_f \bm{n}_f)\cdot \bm{\tau}_\gamma, \quad \text{ and }  \lambda_p := \kappa \nabla p_p \cdot \bm{n}_p.
\end{equation*}
Treating the normal and tangential components of $\bm{\sigma}_f \bm{n}_f$ as independent LMs will prove to be a judicious choice for later well-posedness proofs, as $L^2$ regularity is only required of the tangential component instead of the entire normal stress. The higher regularity for $\lambda_p$ is needed for well-posedness.

Recall that we may write a general vector valued function as a sum of its normal and tangential components. With this in mind, the vector $(\bm{\sigma}_f \bm{n}_f)|_\gamma$ may be rewritten as \begin{equation*}
    \bm{\sigma}_f \bm{n}_f\Big|_{\gamma} = \Big((\bm{\sigma}_f \bm{n}_f)\cdot \bm{n}_f\Big)\Big|_{\gamma} \bm{n}_f + \Big((\bm{\sigma}_f \bm{n}_f)\cdot \bm{\tau}_\gamma\Big)\Big|_\gamma \bm{\tau}_\gamma = g_1 \bm{n}_f + g_2 \bm{\tau}_\gamma,
\end{equation*}
Multiplying by appropriate test functions and integrating by parts, we derive the weak form of \eqref{FPSI_FEM:StokesMom}-\eqref{FPSI_FEM:Structure2}, for given $\bm{f}_f \in (\bm{H}^1(\Omega_f))^*$, $\bm{f}_\eta \in (\bm{H}^1(\Omega_p))^*$, and $f_p \in (H^1(\Omega_p))^*$:

\noindent \textit{Find $\bm{u} \in U, p_f \in Q_f, \bm{\eta} \in X, p_p \in Q_p, g_1 \in \Lambda_{g1}, g_2 \in \Lambda_{g2},$ and $ \lambda_p \in \Lambda_\lambda$ such that for a.e. $t \in (0,T]$, }
\begin{align}
\begin{split} \label{FPSI_FEM:WF:subdomains}
&\rho_f \Big( \frac{\partial \bm{u}}{\partial t}, \bm{v} \Big)_{\Omega_f} + 2 \nu_f \left( D(\bm{u}), D(\bm{v}) \right)_{\Omega_f} - (p_f, \nabla \cdot \bm{v})_{\Omega_f} - \langle g_1 \bm{n}_f,\bm{v}\rangle_\gamma  - ( g_2 \bm{\tau}_\gamma,\bm{v})_\gamma \\
&\hspace{15mm}= \langle\bm{f}_f,\bm{v}\rangle_{\Omega_f} + \langle\bm{u}_N, \bm{v}\rangle_{\Gamma_N^f} \hspace{3mm} \forall \ \bm{v} \in U, \\
&( \nabla \cdot \bm{u}, q )_{\Omega_f} = 0 \hspace{3mm} \forall \ q \in Q_f, \\
&\rho_p \Big( \frac{\partial^2 \bm{\eta}}{\partial t^2}, \bm{\varphi} \Big)_{\Omega_p} +  2 \nu_p \left( D(\bm{\eta}), D(\bm{\varphi}) \right)_{\Omega_p} + \lambda ( \nabla \cdot \bm{\eta}, \nabla \cdot \bm{\varphi} )_{\Omega_p} - \alpha(p_p, \nabla \cdot \bm{\varphi})_{\Omega_p} \\
&\hspace{15mm} - \langle g_1 \bm{n}_p, \bm{\varphi}\rangle_\gamma + ( g_2 \bm{\tau}_\gamma, \bm{\varphi})_\gamma = \langle\bm{f}_\eta, \bm{\varphi}\rangle_{\Omega_p} + \langle\bm{\eta}_N, \bm{\varphi}\rangle_{\widetilde{\Gamma}_N^p} \hspace{3mm} \forall \  \bm{\varphi} \in X, \\
&s_0 \Big( \frac{\partial p_p}{\partial t}, w\Big)_{\Omega_p} + \alpha\left( \nabla \cdot \frac{\partial \bm{\eta}}{\partial t},w \right)_{\Omega_p} + \kappa (\nabla p_p, \nabla w)_{\Omega_p} - (\lambda_p, w)_{\gamma}\\
& \hspace{15mm} = \langle f_p,w \rangle_{\Omega_p} + \langle p_N, w \rangle_{\Gamma^p_N} \hspace{3mm} \forall \ w \in Q_p.
\end{split}
\end{align}

\noindent The boundary integrals involving $g_1 \bm{n}_f$ and $ g_2 \bm{\tau}_\gamma$  in \eqref{FPSI_FEM:WF:subdomains} derive from \eqref{FPSI_FEM:Inter2}, which implies that $g_1\bm{n}_f + g_2 \bm{\tau}_\gamma = \bm{\sigma}_f \bm{n}_f = -\bm{\sigma}_p \bm{n}_p$. The three LMs allow us to rewrite the remaining interface conditions \eqref{FPSI_FEM:Inter1}, \eqref{FPSI_FEM:Inter3}, and \eqref{FPSI_FEM:Inter4} as  
\begin{align}\label{FPSI_FEM:interWith3LMS}
    \begin{split}
        \bm{u}\cdot \bm{n}_f + \frac{\partial \bm{\eta}}{\partial t} \cdot \bm{n}_p - \lambda_p = 0, \quad
        g_1 + p_p = 0, \quad
        g_2 + \beta \bm{u} \cdot \bm{\tau}_\gamma - \beta \frac{\partial \bm{\eta}}{\partial t} \cdot \bm{\tau}_\gamma= 0.
    \end{split}
\end{align}
\begin{remark}
The above conditions represent a restriction to the case $d=2$; however, the extension to $d=3$ would only require the definition of one more LM for the additional tangential direction (i.e., $g_2 = (\bm{\sigma}_f \bm{n}_f) \cdot \bm{\tau}_\gamma^1$ and $g_3 = (\bm{\sigma}_f \bm{n}_f) \cdot \bm{\tau}_\gamma^2$). In the analysis, this additional LM could be grouped with $g_2$ without affecting the structure of the proofs, and so we continue with the assumption $d=2$ for simplicity.
\end{remark}
\noindent We derive the weak form of \eqref{FPSI_FEM:interWith3LMS} by multiplying with test functions $s_1 \in \Lambda_{g1}, \mu \in \Lambda_\lambda$, and $s_2 \in \Lambda_{g2}$, respectively, and integrating:
\begin{align}\label{FPSI_FEM:interWith3LMS-WF}
    \begin{split}
        \langle \bm{u}\cdot \bm{n}_f,s_1\rangle_\gamma + \left\langle \frac{\partial \bm{\eta}}{\partial t} \cdot \bm{n}_p ,s_1 \right\rangle_\gamma - \langle \lambda_p,s_1\rangle_\gamma &= 0 \hspace{5mm} \forall \hspace{1mm} s_1 \in \Lambda_{g1},\\
        \langle g_1,\mu\rangle_\gamma + (p_p,\mu)_\gamma &= 0 \hspace{5mm} \forall \hspace{1mm} \mu \in \Lambda_\lambda ,\\
        (g_2,s_2)_\gamma + \beta (\bm{u} \cdot \bm{\tau}_\gamma,s_2)_\gamma - \beta \left(\frac{\partial \bm{\eta}}{\partial t} \cdot \bm{\tau}_\gamma,s_2\right)_\gamma &= 0 \hspace{5mm} \forall \hspace{1mm} s_2 \in \Lambda_{g2}.
    \end{split}
\end{align}

\noindent To derive the semi-discrete monolithic system, discretize \eqref{FPSI_FEM:WF:subdomains} and \eqref{FPSI_FEM:interWith3LMS-WF} in time using Backward Euler. To signify derivatives in time compactly, we adopt the notation 
\begin{equation}\label{FPSI_FEM:eq:TimeDerivNotation}
    \bm{\dot{\eta}}^{n} := \frac{\bm{\eta}^{n} - \bm{\eta}^{n-1}}{\Delta t}.
\end{equation}
Thus, the second derivative in time may be written as $ \bm{\ddot{\eta}}^{\hspace{0.1mm} n} :=  \dfrac{\bm{\dot{\eta}}^n - \bm{\dot{\eta}}^{n-1}}{\Delta t}$.

We move previous time step terms to the right hand side, scaling by factors of $\Delta t$ to clarify the saddle point (SP) structure. The time-discretized weak form for the FPSI system becomes:

\begin{align}\label{FPSI_FEM:WF:allTimeDisc}
    \begin{split}
       &\rho_f (  \bm{u}^{n+1} , \bm{v} )_{\Omega_f} + 2 \Delta t \nu_f \left( D(\bm{u}^{n+1}), D(\bm{v}) \right)_{\Omega_f} -  ( \Delta t p_f^{n+1}, \nabla \cdot \bm{v})_{\Omega_f} -  \langle \Delta t g_1^{n+1} \bm{n}_f,\bm{v}\rangle_\gamma \\
      &\hspace{15mm} -  ( \Delta t g_2^{n+1} \bm{\tau}_\gamma,\bm{v})_\gamma = \Delta t \langle\bm{f}_f^{n+1},\bm{v}\rangle_{\Omega_f} + \Delta t \langle\bm{u}_N^{n+1}, \bm{v}\rangle_{\Gamma_N^f} +\rho_f (  \bm{u}^n, \bm{v} )_{\Omega_f}  \hspace{3mm} \forall \ \bm{v} \in U \\
&( \nabla \cdot \bm{u}^{n+1}, q )_{\Omega_f} = 0 \hspace{3mm} \forall \ q \in Q_f \\
&\rho_p \left( \frac{\bm{\eta}^{n+1}}{\Delta t}, \bm{\varphi} \right)_{\Omega_p} +  2 \Delta t^2 \nu_p \left( D\left(\frac{\bm{\eta}^{n+1}}{\Delta t}\right), D(\bm{\varphi}) \right)_{\Omega_p} + \lambda \Delta t^2 \left( \nabla \cdot \left( \frac{\bm{\eta}^{n+1}}{\Delta t}\right), \nabla \cdot \bm{\varphi} \right)_{\Omega_p} \\
& \hspace{15mm}- \alpha  ( \Delta t p_p^{n+1}, \nabla \cdot \bm{\varphi})_{\Omega_p}- \langle \Delta t  g_1^{n+1}\bm{n}_p, \bm{\varphi}\rangle_\gamma + ( \Delta t  g_2^{n+1}\bm{\tau}_\gamma, \bm{\varphi})_\gamma  \\
&\hspace{15mm}= \Delta t \langle\bm{f}_\eta^{n+1}, \bm{\varphi}\rangle_{\Omega_p} + \Delta t \langle\bm{\eta}_N^{n+1}, \bm{\varphi}\rangle_{\widetilde{\Gamma}_N^p} + \rho_p \left( \frac{  2\bm{\eta}^n - \bm{\eta}^{n-1}}{\Delta t}, \bm{\varphi} \right)_{\Omega_p}\hspace{3mm} \forall \ \bm{\varphi} \in X \\
&\frac{s_0}{\Delta t^2} (\Delta t p_p^{n+1}, w)_{\Omega_p} + \alpha\left(\nabla \cdot \frac{\bm{\eta}^{n+1}}{\Delta t},w\right)_{\Omega_p} + \frac{\kappa}{\Delta t}  (\Delta t\nabla p_p^{n+1}, \nabla w)_{\Omega_p} -  ( \lambda_p^{n+1}, w)_{\gamma} \\
&\hspace{15mm}=  \langle f_p^{n+1},w \rangle_{\Omega_p} +  \langle p_N^{n+1}, w \rangle_{\Gamma_N^p} + \frac{s_0}{\Delta t^2} (\Delta t p_p^n, w)_{\Omega_p} + \alpha\left(\nabla \cdot \frac{\bm{\eta}^n}{\Delta t},w\right)_{\Omega_p} \hspace{3mm} \forall \ w \in Q_p \\
  & \langle \bm{u}^{n+1} \cdot \bm{n}_f,s_1\rangle_\gamma + \left\langle \frac{\bm{\eta}^{n+1} }{\Delta t} \cdot \bm{n}_p ,s_1 \right\rangle_\gamma   -  \langle \lambda_p^{n+1},s_1\rangle_\gamma = \left\langle \frac{ \bm{\eta}^n}{\Delta t} \cdot \bm{n}_p ,s_1 \right\rangle_\gamma \hspace{5mm} \forall \ s_1 \in \Lambda_{g1}\\
      &  \langle\Delta t g_1^{n+1},\mu\rangle_\gamma  + ( \Delta t p_p^{n+1},\mu)_\gamma = 0 \hspace{5mm} \forall \ \mu \in \Lambda_\lambda\\
      &  \frac{1}{\beta \Delta t} (\Delta t g_2^{n+1},s_2)_\gamma  +  (\bm{u}^{n+1} \cdot \bm{\tau}_\gamma,s_2)_\gamma - \left(\frac{\bm{\eta}^{n+1}}{\Delta t} \cdot \bm{\tau}_\gamma,s_2\right)_\gamma = - \left(\frac{\bm{\eta}^n}{\Delta t} \cdot \bm{\tau}_\gamma,s_2\right)_\gamma \hspace{5mm} \forall \ s_2 \in \Lambda_{g2}.
    \end{split}
\end{align}

\normalsize \noindent Note that the term $\dfrac{2\bm{\eta}^n - \bm{\eta}^{n-1}}{\Delta t}$ on the right-hand side of the third equation of \eqref{FPSI_FEM:WF:allTimeDisc} is equivalent to $(\bm{\eta}^n / \Delta t) + \bm{\dot{\eta}}^n$.  As initial conditions for both $\bm{\eta}$ and $\bm{\dot{\eta}}$ are provided, this term may be obtained from the initial conditions at the first time step, and on all subsequent time steps may be obtained by the formula given in \eqref{FPSI_FEM:eq:TimeDerivNotation}. The formulation \eqref{FPSI_FEM:WF:allTimeDisc} suggests the bilinear forms
\begin{align*} 
&a_1(\cdot,\cdot): \hspace{1mm} (U\times X) \times (U \times X) \mapsto \mathbb{R}, \quad a_2(\cdot,\cdot): \hspace{1mm} Q_p \times Q_p \mapsto \mathbb{R}, \quad a_g(\cdot,\cdot): \hspace{1mm} \Lambda_{g2} \times \Lambda_{g2} \mapsto \mathbb{R}, \\
&b_{\gamma,1}(\cdot,\cdot) : (U\times X) \times \Lambda_{g1} \mapsto \mathbb{R}, \quad b_{\gamma,2}(\cdot,\cdot) : \hspace{1mm} (U \times X) \times \Lambda_{g2} \mapsto \mathbb{R}, \\ 
&b_{LM}(\cdot,\cdot): \Lambda_{g1} \times \Lambda_\lambda \mapsto \mathbb{R}, \quad b_{pp}(\cdot,\cdot): X \times Q_p \mapsto \mathbb{R}, \quad b_{pf}(\cdot,\cdot):U\times Q_f \mapsto \mathbb{R},\\
& b_{1}(\cdot,\cdot): \hspace{1mm} (U \times X) \times (Q_f \times \Lambda_{g1}) \mapsto \mathbb{R}, \quad \text{ and } \quad b_2(\cdot,\cdot): Q_p \times \Lambda_\lambda \mapsto \mathbb{R},
\end{align*}
where 
\begin{align*}
\begin{split}
a_1\left( (\bm{u},\bm{\eta}), (\bm{v}, \bm{\varphi}) \right)  &:= \rho_f (\bm{u},\bm{v})_{\Omega_f} + 2 \nu_f \Delta t (D(\bm{u}),D(\bm{v}))_{\Omega_f} + \rho_p(\bm{\eta},\bm{\varphi})_{\Omega_p}  \\& \hspace{5mm} + 2 \nu_p \Delta t^2  (D(\bm{\eta}),D(\bm{\varphi}))_{\Omega_p} + \Delta t^2 \lambda (\nabla \cdot \bm{\eta}, \nabla \cdot \bm{\varphi})_{\Omega_p}, \\
a_2(p_p, w) &:= \frac{s_0}{\Delta t^2} (p_p,w)_{\Omega_p} + \frac{\kappa}{\Delta t}  (\nabla p_p, \nabla w)_{\Omega_p}\\
a_g(g_2,s_2) &:= \frac{1}{\beta \Delta t} (g_2,s_2)_\gamma,
\end{split} 
\end{align*}

and the mixed terms are defined as
\begin{align*}
\begin{split}
b_{\gamma,1}\left( (\bm{v},\bm{\varphi}) ,s_1\right) &:= -\langle\bm{\varphi},s_1\bm{n}_p\rangle_\gamma -\langle\bm{v},s_1 \bm{n}_f\rangle_\gamma, \quad b_{\gamma,2}\left( (\bm{v},\bm{\varphi}),s_2\right) := (\bm{\varphi},s_2\bm{\tau}_\gamma)_\gamma - (\bm{v},s_2 \bm{\tau}_\gamma)_\gamma \\
b_{LM}(s_1,\mu) &:= \langle   s_1, \mu \rangle_\gamma, \quad b_{pp}(\bm{\varphi},w) := -\alpha (\nabla \cdot \bm{\varphi},w)_{\Omega_p}, \quad 
b_{pf}(\bm{v},q) := -(\nabla \cdot \bm{v}, q)_{\Omega_f} \\
b_{1}\left((\bm{v}, \bm{\varphi}), (q, s_1) \right) &:= b_{\gamma,1} \left( (\bm{v},\bm{\varphi}), s_1\right) + b_{pf}(\bm{v},q), \quad  b_2(w,\mu) := (w,\mu)_\gamma. \\
\end{split}
\end{align*}
To correspond to the weak form, define the scaled variables $\widehat{\bm{\eta}}^{\hspace{0.5mm} n+1}:= \bm{\eta}^{n+1} / \Delta t \in X$, $\widehat{p}_f^{\hspace{0.7mm} n+1} := \Delta t p_f^{n+1} \in Q_f$, $\widehat{p}_p^{\hspace{0.7mm} n+1} := \Delta t p_p^{n+1} \in Q_p$, $\widehat{g}_i^{\hspace{0.7mm} n+1} := \Delta t g_i^{n+1} \in \Lambda_{gi}, \hspace{1mm} \text{ for } i \in \{1,2\}$.  Using these bilinear forms and scaled variables, the Stokes-Biot system \eqref{FPSI_FEM:WF:allTimeDisc} can be represented as follows.

\noindent \textit{Find $\bm{u}^{n+1} \in U, \widehat{p}_f^{\hspace{0.7mm} n+1}  \in Q_f,$ $\widehat{\bm{\eta}}^{\hspace{0.5mm} n+1} \in X, \widehat{p}_p^{\hspace{0.7mm} n+1} \in Q_p, \hspace{0.7mm} \widehat{g}_1^{\hspace{0.7mm} n+1} \in \Lambda_{g1} $, $ \widehat{g}_2^{\hspace{0.7mm} n+1} \in \Lambda_{g2}$, and $\lambda_p \in \Lambda_\lambda$   s.t. $\forall (\bm{v},\bm{\varphi}) \in U \times X, w \in Q_p, (q,s_1) \in Q_f \times \Lambda_{g1}, \mu \in \Lambda_\lambda, s_2 \in \Lambda_{g2}$}
\refstepcounter{equation}\label{FPSI_FEM:eq:bigSaddlePointSystem}
\begin{align}
     &a_1\left( (\bm{u}^{n+1}, \widehat{\bm{\eta}}^{\hspace{0.5mm} n+1}), (\bm{v}, \bm{\varphi})\right) + b_1((\bm{v},\bm{\varphi}),(\widehat{p}_f^{\hspace{0.5mm} n+1} ,\widehat{g}_1^{\hspace{0.7mm} n+1})) +  b_{\gamma,2}((\bm{v},\bm{\varphi}),\widehat{g}_2^{\hspace{0.7mm} n+1}) \notag \\
       &\hspace{15mm}  + b_{pp}(\bm{\varphi},\widehat{p}_p^{\hspace{0.7mm} n+1}) = \mathcal{F}_1(\bm{v},\bm{\varphi}) \tag{\theequation (a)}\label{FPSI_FEM:eq:bigSaddlePointSystem_a}\\ 
      & a_2 \left(\widehat{p}_p^{\hspace{0.7mm} n+1},w\right) - b_2(w, \lambda_p^{n+1})  -b_{pp}(\widehat{\bm{\eta}}^{\hspace{0.5mm} n+1},w) = \mathcal{F}_2(w)  \tag{\theequation (b)}\label{FPSI_FEM:eq:bigSaddlePointSystem_b}\\ 
     &  - b_1((\bm{u}^{n+1},\widehat{\bm{\eta}}^{\hspace{0.5mm} n+1}),(q,s_1)) -  b_{LM}(s_1,\lambda_p^{n+1}) = \mathcal{F}_3(q,s_1)  \tag{\theequation (c)}\label{FPSI_FEM:eq:bigSaddlePointSystem_c} \\ 
     &  b_{LM}(\widehat{g}_1^{\hspace{0.7mm} n+1},\mu)   + b_2( \widehat{p}_p^{\hspace{0.7mm} n+1},\mu) = 0 \tag{\theequation (d)}\label{FPSI_FEM:eq:bigSaddlePointSystem_d}\\ 
      & a_g(\widehat{g}_2^{\hspace{0.7mm} n+1},s_2) - b_{\gamma,2}\left( (\bm{u}^{n+1},\widehat{\bm{\eta}}^{\hspace{0.5mm} n+1}),s_2\right) = \mathcal{F}_4(s_2)  \tag{\theequation (e)}\label{FPSI_FEM:eq:bigSaddlePointSystem_e} 
\end{align}
with right hand sides defined by
 \begin{align*}
\mathcal{F}_1(\bm{v},\bm{\varphi}) &= \Delta t\langle \bm{f}_f^{n+1},\bm{v}\rangle_{\Omega_f}  + \Delta t\langle\bm{u}_N^{n+1}, \bm{v}\rangle_{\Gamma_N^f}  + \rho_f (\bm{u}^n, \bm{v})_{\Omega_f} + \Delta t \langle\bm{f}_\eta^{n+1}, \bm{\varphi}\rangle_{\Omega_p}\\
&\hspace{20mm} + \Delta t \langle\bm{\eta}_N^{n+1}, \bm{\varphi}\rangle_{\widetilde{\Gamma}_N^p}  + \rho_p (2\widehat{\bm{\eta}}^{\hspace{0.5mm} n} - \widehat{\bm{\eta}}^{\hspace{0.5mm} n-1},\bm{\varphi})_{\Omega_p}\\
\mathcal{F}_2(w) &=  \langle f_p^{n+1},w \rangle_{\Omega_p} +  \langle p_N^{n+1}, w \rangle_{\Gamma_N^p} + \frac{s_0}{\Delta t^2} (\widehat{p_p}^n, w)_{\Omega_p} + \alpha\left(\nabla \cdot \widehat{\bm{\eta}}^n,w\right)_{\Omega_p} \\
\mathcal{F}_3(q,s_1) &= \langle \widehat{\bm{\eta}}^{\hspace{0.5mm} n}\cdot \bm{n}_p,s_1 \rangle_\gamma \\
\mathcal{F}_4(s_2) &= -(\widehat{\bm{\eta}}^{\hspace{0.5mm} n} \cdot \bm{\tau}_\gamma,s_2)_\gamma.
\end{align*}

\subsection{Saddle Point Structure and Well-Posedness}\label{FPSI_FEM:subsec:BilinearForms}
The system \eqref{FPSI_FEM:eq:bigSaddlePointSystem} clearly displays a saddle point structure. 
Since the analysis of this system depends on how the variables are grouped, we present several grouping options and provide justification for our selected approach.


First, as both the displacement and pore pressure are in $H^1(\Omega_p)$, we choose not to treat the term $b_{pp}(\cdot,\cdot)$ as a mixed term. Establishing an inf-sup condition for $b_{\gamma, 2}(\cdot,\cdot)$ between the spaces $(U \times X)$ and $\Lambda_{g2}$ would require the trace operators $ \bm{u} \mapsto \bm{u}|_\gamma, \bm{\eta} \mapsto \bm{\eta}|_\gamma$ to be surjective from $U$ and $X$ onto the dual of $\Lambda_{g2}$.  This, in turn, would imply that $\Lambda_{g2} = H^{-1/2}(\gamma)$. However, due to regularity requirements stemming from the term $a_g(\cdot,\cdot)$, the LM $g_2$ must have at least $L^2(\gamma)$ regularity. Consequently, $\Lambda_{g2} $ is defined to be $L^2(\gamma)$ and cannot be chosen as $H^{-1/2}(\gamma)$.
 Therefore, it remains unclear whether an inf-sup condition can be established between $(U \times X)$ and $\Lambda_{g2}$.  Based on these considerations, we group the variables $p_p$ and $g_2$ with $\bm{u}$ and $\bm{\eta}$, so that the terms $b_{pp}(\cdot,\cdot)$ and $b_{\gamma,2}(\cdot,\cdot)$ are treated with the coercive parts of the system rather than as mixed terms. To this end, define the spaces $Y := U \times X \times Q_p \times \Lambda_{g2}$, and $Z := Q_f \times \Lambda_{g1}$, with norms
\begin{align*}
||\bm{y}||_{Y}^2 &= ||(\bm{u},\bm{\eta},p_p,g_2)||_{Y}^2 := ||\bm{u}||_{1,\Omega_f}^2 + ||\bm{\eta}||_{1,\Omega_p}^2 + ||p_p||^2_{1,\Omega_p} + ||g_2||^2_{0,\gamma} \\
||\bm{z}||_{Z}^2 &= ||(p_f,g_1)||_{Z}^2 := ||p_f||_{0,\Omega_f}^2 + ||g_1||_{-1/2,\gamma}^2.
\end{align*}

\noindent We combine several of the bilinear forms between functions in $Y$ to create $a_Y(\cdot,\cdot) : Y \times Y \mapsto \mathbb{R}$:
\begin{align*}
   a_Y\Big((\bm{u},\bm{\eta},p_p,g_2),(\bm{v},\bm{\varphi},w,s_2)\Big) := a_1\Big( (\bm{u},\bm{\eta}),(\bm{v},\bm{\varphi})\Big)  +a_2(p_p,w)  + a_g(g_2,s_2)  \\
    +b_{\gamma,2}((\bm{v},\bm{\varphi}),g_2) + b_{pp}(\bm{\varphi},p_p) - b_{pp}(\bm{\eta},w) - b_{\gamma,2}((\bm{u},\bm{\eta}),s_2), 
\end{align*}

\noindent which simplifies the structure of \eqref{FPSI_FEM:eq:bigSaddlePointSystem}   for all $(\bm{v},\bm{\varphi},w,s_2) \in Y, \mu \in \Lambda_\lambda$, and $(q,s_1) \in Z$ to:
\refstepcounter{equation}\label{FPSI_FEM:eq:bppWithCoercive_BLMmixed}
\begin{align}
        &a_Y\big( (\bm{u}^{n+1}, \widehat{\bm{\eta}}^{\hspace{0.5mm} n+1},\widehat{p}_p^{\hspace{0.7mm} n+1},\widehat{g}_2^{\hspace{0.7mm} n+1}), (\bm{v}, \bm{\varphi},w,s_2)\big) +b_1((\bm{v},\bm{\varphi}),(\widehat{p}_f^{\hspace{0.7mm} n+1},\widehat{g}_1^{\hspace{0.7mm} n+1})) \notag \\  
        & \hspace{10mm} -b_2(w,\lambda_p^{n+1})= \mathcal{F}_1(\bm{v},\bm{\varphi}) +\mathcal{F}_2(w) +\mathcal{F}_4(s_2) \tag{\theequation (a)}\label{FPSI_FEM:eq:bppWithCoercive_BLMmixeda} \\
           & b_{LM}(\widehat{g}_1^{\hspace{0.7mm} n+1},\mu)   + b_2( \widehat{p}_p^{\hspace{0.7mm} n+1},\mu) = 0 \tag{\theequation (b)}\label{FPSI_FEM:eq:bppWithCoercive_BLMmixedb}\\ 
&  - b_1((\bm{u}^{n+1},\widehat{\bm{\eta}}^{\hspace{0.5mm} n+1}),(q,s_1))  -  b_{LM}(s_1,\lambda_p^{n+1}) = \mathcal{F}_3(q,s_1) \tag{\theequation (c)}\label{FPSI_FEM:eq:bppWithCoercive_BLMmixedc} 
\end{align}

At this stage, there are several options for how to define the space $\Lambda_\lambda$, with their structures illustrated visually in Table \ref{FPSI_FEM:tab:Possible SP structures for analysis}.

In the first mixed formulation, MF(1), $\Lambda_\lambda$ is grouped with $Z$ so that \eqref{FPSI_FEM:eq:bppWithCoercive_BLMmixed} becomes a single saddle point system. The mixed term $-b_2(\cdot,\cdot)+b_1(\cdot,\cdot)$ would then be required to satisfy an inf-sup condition between $Y$ and $\Lambda_\lambda \times Z$. As part of this condition, one would need to show the surjectivity of the trace operators $ \bm{u} \mapsto \bm{u}|_\gamma, \bm{\eta} \mapsto \bm{\eta}|_\gamma$, and $p_p \mapsto p_p |_\gamma$ from $U,X,$ and $Q_p$ onto the duals of $\Lambda_{g1}$ and $\Lambda_\lambda$ (\cite{Chen_2021, Gatica_2011_structure}). However, this requires both $\Lambda_\lambda$ and $\Lambda_{g1}$ to be $H^{-1/2}(\gamma)$, which would make the term $b_{LM}(\cdot,\cdot)$ undefined as it is an $L^2$ inner product. Additionally, the lower $2 \times 2$ block of the system would need to be positive semi-definite (\cite{Gatica_2011_structure}), and this grouping of variables lacks coercive terms on the diagonals of that block. 

\begin{small}
\begin{table}[h]
\centering
\subfloat[Single SP system; $\Lambda_\lambda$ with $Z$]{
\begin{tabular}{|c|c||cc|}
    \hline 
MF(1) & $\bm{y}$ & $\lambda_p$ & $p_f,g_1$ \\
\hline 
\ref{FPSI_FEM:eq:bppWithCoercive_BLMmixeda}   & $a_Y$ & $-b_2$ & $b_1$   \\ \hline 
\ref{FPSI_FEM:eq:bppWithCoercive_BLMmixedb} & $b_2$ &  & $b_{LM}$ \\ 
\ref{FPSI_FEM:eq:bppWithCoercive_BLMmixedc} & $-b_1$ & $-b_{LM}$ & \\ \hline 
\end{tabular}}
\qquad 
\subfloat[Double SP system]{\begin{tabular}{|c|c|c||c|}
    \hline 
MF(2) & $\bm{y}$ & $\lambda_p$ & $p_f,g_1$ \\
\hline 
\ref{FPSI_FEM:eq:bppWithCoercive_BLMmixeda}   & $a_Y$ & $-b_2$ & $b_1$   \\ \hline
\ref{FPSI_FEM:eq:bppWithCoercive_BLMmixedb} & $b_2$ &  & $b_{LM}$ \\ \hline 
\ref{FPSI_FEM:eq:bppWithCoercive_BLMmixedc} & $-b_1$ & $-b_{LM}$ & \\ \hline 
\end{tabular}}
\\
\subfloat[Single SP system; $\Lambda_\lambda$ with $Y$]{
\begin{tabular}{|c|cc||c|}
    \hline 
MF(3) & $\bm{y}$ & $\lambda_p$ & $p_f,g_1$ \\
\hline 
\ref{FPSI_FEM:eq:bppWithCoercive_BLMmixeda}   & $a_Y$ & $-b_2$ & $b_1$   \\ 
\ref{FPSI_FEM:eq:bppWithCoercive_BLMmixedb} & $b_2$ &  & $b_{LM}$ \\ \hline 
\ref{FPSI_FEM:eq:bppWithCoercive_BLMmixedc} & $-b_1$ & $-b_{LM}$ & \\ \hline 
\end{tabular}}
\qquad 
\subfloat[Stabilized SP system; $\Lambda_\lambda$ with $Y$]{
\begin{tabular}{|c|cc||c|}
    \hline 
MF(4) & $\bm{y}$ & $\lambda_p$ & $p_f,g_1$ \\
\hline 
\ref{FPSI_FEM:eq:bppWithCoercive_BLMmixeda}   & $a_Y$ & $-b_2$ & $b_1$   \\ 
\ref{FPSI_FEM:eq:bppWithCoercive_BLMmixedb} & $b_2$ &$a_\lambda$ & $b_{LM}$ \\ \hline 
\ref{FPSI_FEM:eq:bppWithCoercive_BLMmixedc} & $-b_1$ & $-b_{LM}$ & \\ \hline 
\end{tabular}}
\caption{Possible groupings for FPSI saddle point system}
\label{FPSI_FEM:tab:Possible SP structures for analysis}
\end{table}
\end{small}

With our choices for the spaces $\Lambda_\lambda$ and $\Lambda_{g1}$, it is straightforward to show inf-sup conditions between $Y \times \Lambda_\lambda$ and $Z$ for the terms $b_1(\cdot,\cdot)$ and $b_{LM}(\cdot,\cdot)$. This leads to two options for the treatment of $\Lambda_\lambda$. One option is to formulate system \eqref{FPSI_FEM:eq:bppWithCoercive_BLMmixed} as a double saddle point system, as shown in MF(2), where an inf-sup condition for 
$b_2(\cdot,\cdot)$ between $Y$ and $\Lambda_\lambda$ must hold. However, this again appears to require setting $\Lambda_\lambda = H^{-1/2}(\gamma)$, which would, in turn, demand higher regularity for 
$\Lambda_{g1}$ and introduce complications in proving the outer inf-sup conditions for $b_1(\cdot,\cdot)$ and $b_{LM}(\cdot,\cdot)$.  

We therefore consider MF(3), where the system is once again formulated as a single saddle point problem, but now with $\Lambda_\lambda$ with $Y$ instead of $Z$. 
The only problem with this formulation is the lack of a coercive term for $\lambda_p$. To address this, we propose a modification to the formulation: the inclusion of a stabilization term
$  a_\lambda(\cdot,\cdot):$ $\Lambda_\lambda \times \Lambda_\lambda \mapsto \mathbb{R}$, defined by $ a_\lambda(\lambda_p^{n+1},\mu) = \overline{\epsilon} (\lambda_p^{n+1},\mu)_{1/2,\gamma}$ in equation \eqref{FPSI_FEM:eq:bppWithCoercive_BLMmixedb}, where $\overline{\epsilon}$ is a small, positive number. This modified formulation is shown in MF(4).  We note that $H^{1/2}(\gamma)$ is a Hilbert space equivalent to the Sobolev space $W^{1/2,2}(\gamma)$, whose inner product may be defined with a H\"{o}lder-like seminorm for the fractional derivative term (\cite{Chen_Sobolev, Lions_2012}).

Although included for different purposes, the term $a_\lambda(\cdot,\cdot)$ retains some similarities to the term added in the fluid pressure Laplacian (FPL) technique (\cite{Cesmelioglu_2020, Lee_2023}). In the FPL technique, a penalty term is added to the fully discrete weak formulation in order to eradicate spurious pressure oscillations which may occur in the Biot problem even with inf-sup stable spaces for low permeability or low compressibility. With a large pressure gradient, poroelastic locking can still be a concern, and thus numerical diffusion is added to the Darcy pressure by including a penalty term of the form $\epsilon h^2 (\nabla \dot{p}_{p,h},\nabla w)$. Recalling the definition of $\lambda_p$ as $\lambda_p := \kappa \nabla p_p \cdot \bm{n}_p$ on $\gamma$, the term $a_\lambda(\cdot,\cdot)$ can be viewed as an inner product between functions representing pressure gradients restricted from $H^1(\Omega_p)$ to $H^{1/2}(\gamma)$. While $a_\lambda(\cdot,\cdot)$ is used in our work for the semi-discrete weak form and is an inner product over an interface instead of a subdomain, we mention the FPL technique here as it is reminiscent of our stabilization term.

\begin{remark}
The stabilization term could be avoided by originally grouping $\Lambda_{g2}$ with $\Lambda_\lambda$, giving rise to a double saddle point structure where the smaller saddle point system is between $(U \times X \times Q_p)
$ and $(\Lambda_{g2} \times \Lambda_\lambda)$. In this scenario, the presence of the positive semi-definite term $a_{g2}(\cdot,\cdot)$ would negate the need for the stabilization. However, it would then be necessary to prove inf-sup conditions for $b_{\gamma 2}(\cdot,\cdot)$ and $b_2(\cdot,\cdot)$ with respect to that smaller saddle point system. For the fully discrete formulation this technique would yield well-posedness; however, as explained above, treating $b_{\gamma 2}$ as a mixed term is not a viable option for the continuous model. 
\end{remark}

Numerical results suggest that the stabilization term $a_\lambda(\cdot,\cdot)$ is not needed in practice. However, showing well-posedness of the continuous formulation with $\overline{\epsilon} = 0$ remains an open problem. For this work, we continue with the analysis of MF(4), assuming that $\overline{\epsilon} > 0$. Grouping spaces together one final time, we define $M:= Y \times \Lambda_\lambda$, with norm $||\bm{m}||_M^2 = ||(\bm{y},\mu)||^2_M := ||\bm{y}||_Y^2 + ||\mu||_{1/2,\gamma}^2$. Using the structure of MF(4), \eqref{FPSI_FEM:eq:bppWithCoercive_BLMmixed} can be expressed as:

\noindent \textit{Find $\bm{m}^{n+1} := (\bm{u}^{n+1},\widehat{\bm{\eta}}^{\hspace{0.5mm} n+1},\widehat{p}_p^{\hspace{0.7mm} n+1},\widehat{g}_2^{\hspace{0.7mm} n+1}, \lambda_p^{n+1}) \in M$, $ \bm{z}^{n+1} := (\widehat{p}_f^{\hspace{0.7mm} n+1},\widehat{g}_1^{\hspace{0.7mm} n+1}) \in Z$ such that:}
\begin{align}\label{FPSI_FEM:MF4toUse}
    \begin{split}
       a_M\left( \bm{m}^{n+1}, \bm{\zeta} \right) +b_{MZ}(\bm{\zeta},\bm{z}^{n+1}) 
        &= \mathcal{F}_M(\bm{\zeta}) \hspace{5mm} \forall \hspace{1mm} \bm{\zeta} := (\bm{v},\bm{\varphi},w,s_2,\mu) \in M,\\
b_{MZ}(\bm{m}^{n+1},\bm{\xi}) &= -\mathcal{F}_3(\bm{\xi}) \hspace{5mm} \forall \hspace{1mm} \bm{\xi} := (q,s_1) \in Z ,\\
    \end{split}
\end{align}
where 
\begin{align*}
   a_M\left(\bm{m}^{n+1}, \bm{\zeta} \right)  &:=  a_Y\left( (\bm{u}^{n+1}, \widehat{\bm{\eta}}^{\hspace{0.5mm} n+1},\widehat{p}_p^{\hspace{0.7mm} n+1},\widehat{g}_2^{\hspace{0.7mm} n+1}), (\bm{v}, \bm{\varphi},w,s_2)\right) \\
   &- b_2(w,\lambda_p^{n+1}) + b_2(\widehat{p}_p^{\hspace{0.7mm} n+1},\mu) + a_\lambda(\lambda_p^{n+1},\mu) \\ 
    b_{MZ}(\bm{\zeta},\bm{\xi}) &:= b_1((\bm{v},\bm{\varphi}),(q,s_1)) +    b_{LM}(s_1,\mu)\\
    \mathcal{F}_M(\bm{\zeta}) &:= \mathcal{F}_1(\bm{v},\bm{\varphi}) +\mathcal{F}_2(w)+\mathcal{F}_4(s_2).
 \end{align*}

To show the well-posedness of the saddle point system \eqref{FPSI_FEM:MF4toUse},  $a_M(\cdot,\cdot)$ should be coercive on the kernel of $b_{MZ}(\cdot,\cdot)$ in $M$, and the inf-sup condition for $b_{MZ}(\cdot,\cdot)$ must hold between $M$ and $Z$ (\cite{Brezzi_1990}). It is straightforward to show the continuity of $\mathcal{F}_M$ and $\mathcal{F}_3$, which are defined in terms of given forcing functions, Neumann conditions, and previous time step solutions. We begin by showing the inf-sup condition.

\begin{theorem}\label{FPSI_FEM:lem:Binfsup}
    There exists a positive constant $\beta_1 > 0$ such that $$\underset{\bm{0} \neq \bm{\zeta} \in M}{\sup} \frac{b_{MZ}(\bm{\zeta},\bm{\xi})}{||\bm{\zeta}||_{M}} \geq \beta_1 ||\bm{\xi}||_{Z} \hspace{3mm} \forall \bm{\xi} \in Z.$$
\end{theorem}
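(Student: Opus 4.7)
The strategy is to realize the inf-sup condition by constructing, for each $\bm{\xi} = (q, s_1) \in Z$, a single test function $\bm{\zeta} \in M$ with only two nonzero components, chosen so that the two terms in
\[
b_{MZ}(\bm{\zeta},\bm{\xi}) = -\langle \bm{v}, s_1 \bm{n}_f\rangle_\gamma - \langle \bm{\varphi}, s_1 \bm{n}_p\rangle_\gamma - (\nabla\cdot\bm{v},q)_{\Omega_f} + \langle s_1, \mu\rangle_\gamma
\]
decouple: the velocity slot $\bm{v}$ handles $q$ via Stokes inf-sup, and the slot $\mu \in \Lambda_\lambda$ handles $s_1$ via the Riesz identification in the $H^{-1/2}(\gamma)$--$H^{1/2}(\gamma)$ duality. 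I will set $\bm{\varphi}=0$, $w=0$, $s_2 = 0$ throughout, and arrange $\bm{v}$ so that its trace on $\gamma$ vanishes, which kills the cross terms $\langle \bm{v}, s_1 \bm{n}_f\rangle_\gamma$.

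For the $q$ component, I would invoke the classical Stokes-type inf-sup on the subspace $\{\bm{v} \in U : \bm{v}|_\gamma = 0\} \subset U$. Since $\Gamma_D^f$ and $\Gamma_N^f$ both have positive measure, prescribing a Dirichlet condition on $\Gamma_D^f \cup \gamma$ while leaving $\Gamma_N^f$ free still leaves $\Gamma_N^f$ with positive measure, so the full $L^2(\Omega_f)$ pressure space is in the range of $-\nabla\cdot$ on this restricted velocity space. Concretely, there exist constants $C_1, C_2 > 0$ and $\bm{v}_q \in U$ with $\bm{v}_q|_\gamma = 0$ satisfying $-(\nabla\cdot \bm{v}_q, q)_{\Omega_f} \ge C_1 \|q\|_{0,\Omega_f}^2$ and $\|\bm{v}_q\|_{1,\Omega_f} \le C_2 \|q\|_{0,\Omega_f}$. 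The vanishing trace on $\gamma$ is the key feature that lets this step be performed independently of $s_1$.

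For the $s_1$ component, I would use that $\Lambda_\lambda = H^{1/2}(\gamma)$ is the topological dual of $\Lambda_{g1} = H^{-1/2}(\gamma)$ with pairing $\langle\cdot,\cdot\rangle_\gamma$. The Riesz/duality map furnishes $\mu_{s_1} \in \Lambda_\lambda$ with $\langle s_1, \mu_{s_1}\rangle_\gamma = \|s_1\|_{-1/2,\gamma}^2$ and $\|\mu_{s_1}\|_{1/2,\gamma} = \|s_1\|_{-1/2,\gamma}$.

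Setting $\bm{\zeta} = (\bm{v}_q, 0, 0, 0, \mu_{s_1})$, all boundary terms involving $\bm{v}|_\gamma$ and $\bm{\varphi}$ drop out, and
\[
b_{MZ}(\bm{\zeta},\bm{\xi}) = -(\nabla\cdot \bm{v}_q, q)_{\Omega_f} + \langle s_1, \mu_{s_1}\rangle_\gamma \ge C_1 \|q\|_{0,\Omega_f}^2 + \|s_1\|_{-1/2,\gamma}^2 \ge C_3 \|\bm{\xi}\|_Z^2,
\]
while $\|\bm{\zeta}\|_M^2 = \|\bm{v}_q\|_{1,\Omega_f}^2 + \|\mu_{s_1}\|_{1/2,\gamma}^2 \le (C_2^2 + 1) \|\bm{\xi}\|_Z^2$. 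Dividing yields $b_{MZ}(\bm{\zeta},\bm{\xi})/\|\bm{\zeta}\|_M \ge \beta_1 \|\bm{\xi}\|_Z$ with $\beta_1 := C_3 / \sqrt{C_2^2 + 1}$, which is the claim. The main obstacle I anticipate is justifying the restricted Stokes inf-sup where the Dirichlet trace is imposed on $\Gamma_D^f \cup \gamma$ rather than only $\Gamma_D^f$; this is where the standing assumption that $\Gamma_N^f$ has positive measure is essential, since otherwise only mean-zero pressures would be controlled and we would need an additional argument to absorb a constant mode.
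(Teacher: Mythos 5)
Your proposal is correct and delivers the same conclusion by a genuinely simpler test function than the paper's proof. The paper sets $\widetilde{\bm{m}} = (\widetilde{\bm{u}}, \widetilde{\bm{\eta}}, 0, 0, \mu^*)$ with $\mu^*$ the Riesz representative of $s_1$, and crucially chooses $\widetilde{\bm{u}}|_\gamma = -\mu^*\bm{n}_f$ and $\widetilde{\bm{\eta}}|_\gamma = -\mu^*\bm{n}_p$ via Lemma~2.1 of the cited reference, so that the boundary pairings $-\langle\widetilde{\bm{u}},s_1\bm{n}_f\rangle_\gamma$, $-\langle\widetilde{\bm{\eta}},s_1\bm{n}_p\rangle_\gamma$ become the positive quantities $\|\mu^*\bm{n}_f\|_{1/2,\gamma}^2$, $\|\mu^*\bm{n}_p\|_{1/2,\gamma}^2$ after invoking the Riesz identity. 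You instead zero out these boundary pairings entirely by taking $\bm{v}_q|_\gamma = 0$ and $\bm{\varphi} = 0$, placing the full burden of controlling $s_1$ on the $\mu$-slot alone. Both constructions rest on exactly the same divergence-surjectivity lemma with prescribed trace on $\gamma$ — you just use its zero-trace specialization — and both require the positive measure of $\Gamma_N^f$ for the reason you flagged. Your version is more economical: one lifting rather than two, no need for $\widetilde{\bm{\eta}}$ at all, and a clean decoupling in which $\bm{v}_q$ handles only $q$ and $\mu_{s_1}$ handles only $s_1$, which in turn makes the final algebraic bookkeeping (no $1/4$ losses from concatenated $\ell^1$–$\ell^2$ inequalities) a bit tighter. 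The one step you should still make explicit, as you anticipated, is citing the constrained Bogovskii-type result that makes $\bm{v}_q$ available on the subspace $\{\bm{v} \in U : \bm{v}|_\gamma = 0\}$; the paper's Lemma~2.1 reference covers this. Note also that neither continuous construction translates to the discrete level: the paper's Theorem~3.1 abandons the Riesz map altogether in favor of a Fortin interpolation argument, so your simpler route costs nothing in terms of reuse.
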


\begin{proof}
    Let $ \widetilde{\bm{\xi}} = (\widetilde{q},\widetilde{s}_1) \in Z$ be given. As $ \Lambda_{g1} = H^{-1/2}(\gamma)$ is a Hilbert space, by the Riesz Representation Theorem, we may find $\mu^* \in H^{1/2}(\gamma) = \Lambda_\lambda$ such that 
    \begin{equation}\label{FPSI_FEM:eq:RieszRepContinuous}
        || \mu^*||_{1/2,\gamma} = ||\widetilde{s}_1||_{-1/2,\gamma}, \text{ and } \langle \widetilde{s}_1,\theta \rangle_\gamma = (\mu^*,\theta)_{1/2,\gamma} \hspace{3mm} \forall \hspace{1mm} \theta \in H^{1/2}(\gamma).
        \end{equation}
    
 \noindent With $\mu^* \bm{n}_f \in \bm{H^{1/2}}(\gamma)$, by Lemma 2.1 in (\cite{deCastro_CAMWA_2025}), we can find a $\widetilde{\bm{u}} \in U$ s.t.
\begin{align}\label{FPSI_FEM:eq:UTrace_InfSup}
\begin{split}
b_{pf}(\widetilde{\bm{u}},q) &= -(\nabla \cdot \widetilde{\bm{u}}, q)_{\Omega_f} = (\widetilde{q},q)_{\Omega_f} \hspace{5mm} \forall q \in Q_f,  \\
\widetilde{\bm{u}}\big|_\gamma &= -\mu^* \bm{n}_f, \\
||\widetilde{\bm{u}}||_1 &\leq C_1(||\widetilde{q}||_0 + ||\mu^* \bm{n}_f||_{1/2,\gamma}). 
\end{split}
\end{align}
\noindent Our assumption that the measure of $\Gamma^f_N$ is nonzero is needed for the application of the divergence theorem in this lemma. By the same lemma, we can find $\widetilde{\bm{\eta}} \in X$ s.t. 
\begin{equation}\label{FPSI_FEM:eq:EtaTrace_InfSup}
    -(\nabla \cdot \widetilde{\bm{\eta}},w)_{\Omega_p} = 0 \ \forall \ w \in Q_p, \qquad \widetilde{\bm{\eta}}\big|_\gamma = -\mu^* \bm{n}_p, \qquad ||\widetilde{\bm{\eta}}||_1 \leq C_2 || \mu^* \bm{n}_p||_{1/2,\gamma}.
\end{equation}
Combining inequalities, 
\begin{align}\label{FPSI_FEM:eq:UEMU_bound}
||\widetilde{\bm{u}}||_{1,\Omega_f} + ||\widetilde{\bm{\eta}}||_{1,\Omega_p} + ||\mu^*||_{1/2,\gamma} &\leq \overline{C} (||\mu^* \bm{n}_p|| _{1/2,\gamma} + ||\mu^* \bm{n}_f||_{1/2,\gamma} + ||\widetilde{q}||_{0,\Omega_f} + ||\mu^* ||_{1/2,\gamma} ). 
\end{align}
Let $\widetilde{\bm{m}} := (\widetilde{\bm{u}},\widetilde{\bm{\eta}},0,0,\mu^*)$ and apply the trace properties in \eqref{FPSI_FEM:eq:UTrace_InfSup} and \eqref{FPSI_FEM:eq:EtaTrace_InfSup}:
    \begin{align*}
        \underset{0\neq\bm{\zeta} \in M}{\sup} \frac{b_{MZ}(\bm{\zeta},\widetilde{\bm{\xi}})}{||\bm{\zeta}||_M} &\geq     \frac{b_{MZ}(\widetilde{\bm{m}},\widetilde{\bm{\xi}})}{||\widetilde{\bm{m}}||_M}   = \frac{b_{1}\left( (\widetilde{\bm{u}},\widetilde{\bm{\eta}}), ( \widetilde{q},\widetilde{s}_1)  \right)+b_{LM}(\widetilde{s}_1,\mu^*)}{||\widetilde{\bm{m}}||_M} \\
        &= \frac{-\langle\widetilde{\bm{\eta}},\widetilde{s}_1\bm{n}_p\rangle_\gamma - \langle \widetilde{\bm{u}},\widetilde{s}_1\bm{n}_f\rangle_\gamma - (\nabla \cdot \widetilde{\bm{u}},\widetilde{q})_{\Omega_f} + \langle \widetilde{s}_1,\mu^*\rangle_\gamma }{||\widetilde{\bm{m}}||_M}\\
    &= \frac{\langle \mu^* \bm{n}_p,\widetilde{s}_1\bm{n}_p\rangle_\gamma + \langle \mu^* \bm{n}_f,\widetilde{s}_1\bm{n}_f\rangle_\gamma - (\nabla \cdot \widetilde{\bm{u}},\widetilde{q})_{\Omega_f} +\langle \widetilde{s}_1,\mu^*\rangle_\gamma }{||\widetilde{\bm{m}}||_M}.
    \end{align*}
Using the identities in \eqref{FPSI_FEM:eq:RieszRepContinuous} and \eqref{FPSI_FEM:eq:UTrace_InfSup}, we continue the equality as:
    \begin{align*}
 &= \frac{((\mu^* \bm{n}_p)\cdot \bm{n}_p,\mu^*)_{1/2,\gamma} + ((\mu^* \bm{n}_f)\cdot\bm{n}_f,\mu^*)_{1/2,\gamma} - (\nabla \cdot \widetilde{\bm{u}},\widetilde{q})_{\Omega_f} +( \mu^*,\mu^*)_{1/2,\gamma} }{||\widetilde{\bm{m}}||_M}\\
 &= \frac{||\mu^* \bm{n}_p||^2_{1/2,\gamma} + ||\mu^* \bm{n}_f||^2_{1/2,\gamma} + ||\widetilde{q}||_{0,\Omega_f}^2 +||\mu^*||^2_{1/2,\gamma} }{||\widetilde{\bm{m}}||_M}.
 \end{align*}
 We finish by employing algebraic inequalities 
 and the bound in \eqref{FPSI_FEM:eq:UEMU_bound}
 \begin{align*}
 \underset{0\neq\bm{\zeta} \in M}{\sup} \frac{b_{MZ}(\bm{\zeta},\widetilde{\bm{\xi}})}{||\bm{\zeta}||_M}  &\geq \frac{1}{||\widetilde{\bm{m}}||_M}  \frac{1}{4}\Big(||\mu^* \bm{n}_p||_{1/2,\gamma} + ||\mu^* \bm{n}_f||_{1/2,\gamma} + ||\widetilde{q}||_{0,\Omega_f} +||\mu^*||_{1/2,\gamma}\Big)^2  \\
   &\geq   \frac{1}{4 \overline{C}||\widetilde{\bm{m}}||_M}  \Big(||\widetilde{\bm{u}}||_{1,\Omega_f} + ||\widetilde{\bm{\eta}}||_{1,\Omega_p} + ||\mu^*||_{1/2,\gamma} \Big) \\
   & \hspace{25mm} \Big(||\mu^* \bm{n}_p||_{1/2,\gamma} + ||\mu^* \bm{n}_f||_{1/2,\gamma} + ||\widetilde{q}||_{0,\Omega_f} +||\mu^*||_{1/2,\gamma}\Big)\\
    &\geq \frac{1}{4 \overline{C}||\widetilde{\bm{m}}||_M} \Big(||\widetilde{\bm{u}}||_{1,\Omega_f} + ||\widetilde{\bm{\eta}}||_{1,\Omega_p} + ||\mu^*||_{1/2,\gamma} \Big)\Big( ||\widetilde{q}||_{0,\Omega_f} +||\widetilde{s}_1||_{-1/2,\gamma}\Big)\\
  &\geq \frac{1}{4\overline{C}} \frac{\Big(||\widetilde{\bm{u}}||_{1,\Omega_f}^2 + ||\widetilde{\bm{\eta}}||_{1,\Omega_p}^2 + ||\mu^*||_{1/2,\gamma}^2 \Big)^{1/2} \Big( ||\widetilde{q}||^2_{0,\Omega_f} +||\widetilde{s}_1||^2_{-1/2,\gamma}\Big)^{1/2} }{\Big(|| \widetilde{\bm{u}} ||_{1,\Omega_f}^2 + ||\widetilde{\bm{\eta}}||_{1,\Omega_p}^2 + ||\mu^*||_{1/2,\gamma}^2 \Big)^{1/2}} \\
    &= \frac{1}{4\overline{C}} \Big( ||\widetilde{q}||_{0,\Omega_f}^2 +||\widetilde{s}_1||_{-1/2,\gamma}^2 \Big)^{1/2} .
    \end{align*}
    Thus, as $\bm{\xi} \in Z$ is arbitrary, $$ \underset{0\neq\bm{\zeta} \in M}{\sup} \frac{b_{MZ}(\bm{\zeta},\widetilde{\bm{\xi}})}{||\bm{\zeta}||_M} \geq \frac{1}{4\overline{C}} || \widetilde{\bm{\xi}}||_Z$$ implies the desired condition.
\end{proof}

\noindent Next, we show that $a_M(\cdot,\cdot)$ is coercive on $M$. 

\begin{lemma}\label{FPSI_FEM:lem:AM_ell}
    The bilinear form $a_M(\cdot,\cdot)$ is coercive, i.e. there exists an $\alpha_1 > 0 $ such that \\
    $ a_M(\bm{m},\bm{m}) \geq \alpha_1 ||\bm{m}||^2_M \quad \forall \ \bm{m} \in M$.
\end{lemma}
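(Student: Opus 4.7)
The plan exploits the skew-symmetric placement of the coupling terms in $a_M(\cdot,\cdot)$: when we test with $\bm{\zeta}=\bm{m}$, every off-diagonal piece cancels identically. Writing $\bm{m}=(\bm{u},\bm{\eta},p_p,g_2,\lambda_p)$ and expanding $a_M(\bm{m},\bm{m})$ using the definitions of $a_Y(\cdot,\cdot)$ and $a_M(\cdot,\cdot)$, the two occurrences of $b_{\gamma,2}((\bm{u},\bm{\eta}),g_2)$ with opposite signs cancel inside $a_Y$, the pair $b_{pp}(\bm{\eta},p_p)-b_{pp}(\bm{\eta},p_p)$ vanishes, and the augmented contributions $-b_2(p_p,\lambda_p)+b_2(p_p,\lambda_p)$ also cancel. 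What remains is a purely diagonal expression
\[
a_M(\bm{m},\bm{m}) \;=\; a_1\bigl((\bm{u},\bm{\eta}),(\bm{u},\bm{\eta})\bigr) + a_2(p_p,p_p) + a_g(g_2,g_2) + a_\lambda(\lambda_p,\lambda_p),
\]
and the task reduces to bounding each diagonal form from below by its component norm.

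Next I would bound each summand separately. For $a_1$, the definition gives $\rho_f\|\bm{u}\|_{0,\Omega_f}^2 + 2\nu_f\Delta t\,\|D(\bm{u})\|_{0,\Omega_f}^2 + \rho_p\|\bm{\eta}\|_{0,\Omega_p}^2 + 2\nu_p\Delta t^2\|D(\bm{\eta})\|_{0,\Omega_p}^2 + \lambda\Delta t^2\|\nabla\cdot\bm{\eta}\|_{0,\Omega_p}^2$; because the paper defines $\|\bm{v}\|_{1,\Omega_i}^2 := \|\bm{v}\|_{0,\Omega_i}^2 + \|D(\bm{v})\|_{0,\Omega_i}^2$, this expression dominates $\min(\rho_f,2\nu_f\Delta t)\|\bm{u}\|_{1,\Omega_f}^2 + \min(\rho_p,2\nu_p\Delta t^2)\|\bm{\eta}\|_{1,\Omega_p}^2$ after discarding the nonnegative divergence term, with no appeal to Korn's inequality needed. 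The same pairing gives $a_2(p_p,p_p)\ge\min(s_0/\Delta t^2,\kappa/\Delta t)\|p_p\|_{1,\Omega_p}^2$, while $a_g(g_2,g_2)=(\beta\Delta t)^{-1}\|g_2\|_{0,\gamma}^2$ and $a_\lambda(\lambda_p,\lambda_p)=\overline{\epsilon}\|\lambda_p\|_{1/2,\gamma}^2$ are immediate. Taking $\alpha_1$ to be the minimum of these four constants yields $a_M(\bm{m},\bm{m})\ge\alpha_1\|\bm{m}\|_M^2$.

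There is no serious analytic obstacle: the work has been front-loaded into the antisymmetric arrangement of the mixed terms inside $a_M$ and into the choice of norm on $M$ (in particular, the use of $D(\cdot)$ rather than $\nabla(\cdot)$ in the definition of $\|\cdot\|_{1,\Omega_i}$). What the computation does make transparent is the essential role of the stabilization $a_\lambda(\cdot,\cdot)$: without strictly positive $\overline{\epsilon}$, the $\lambda_p$ component contributes nothing to $a_M(\bm{m},\bm{m})$ and coercivity in the $\|\cdot\|_{1/2,\gamma}$ part of the $M$-norm fails, which is precisely the difficulty flagged in the discussion preceding MF(4).
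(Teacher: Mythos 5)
Your argument is correct and is essentially the same as the paper's own proof: both expand $a_M(\bm{m},\bm{m})$, observe that the skew-symmetric arrangement of $b_{\gamma,2}$, $b_{pp}$, and $b_2$ makes all cross terms cancel identically, and then bound the surviving diagonal terms below by $\min\{\rho_f,\,2\nu_f\Delta t,\,\rho_p,\,2\nu_p\Delta t^2,\,s_0/\Delta t^2,\,\kappa/\Delta t,\,1/(\beta\Delta t),\,\overline{\epsilon}\}\,\|\bm{m}\|_M^2$, using the paper's $D(\cdot)$-based definition of $\|\cdot\|_{1,\Omega_i}$ so that Korn's inequality is not needed. Your added observation that the stabilization term $a_\lambda$ with $\overline{\epsilon}>0$ is what supplies the $\|\lambda_p\|_{1/2,\gamma}^2$ contribution is exactly the point the paper makes in motivating MF(4).
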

\begin{proof}
     By definition, for $\bm{m} = (\bm{v},\bm{\varphi},w,s_2,\mu) \in M$, 
    \begin{align*}
        a_M&\Big( (\bm{v},\bm{\varphi},w,s_2,\mu),(\bm{v},\bm{\varphi},w,s_2,\mu) \Big) = \rho_f (\bm{v},\bm{v})_{\Omega_f} + 2 \nu_f \Delta t (D(\bm{v}),D(\bm{v}))_{\Omega_f}  + \rho_p(\bm{\varphi},\bm{\varphi})_{\Omega_p}\\
        &+ 2 \nu_p \Delta t^2  (D(\bm{\varphi}),D(\bm{\varphi}))_{\Omega_p}   + \Delta t^2 \lambda (\nabla \cdot \bm{\varphi}, \nabla \cdot \bm{\varphi})_{\Omega_p} +\frac{s_0}{\Delta t^2} (w,w)_{\Omega_p} + \frac{\kappa}{\Delta t} (\nabla w,\nabla w)_{\Omega_p}  \\
        &+  \frac{1}{\beta \Delta t}(s_2,s_2)_\gamma  + (\bm{\varphi},s_2\bm{\tau}_\gamma)_\gamma - (\bm{v},s_2\bm{\tau}_\gamma)_\gamma
         - \alpha(\nabla \cdot \bm{\varphi},w)_{\Omega_p}    +  \alpha(\nabla \cdot \bm{\varphi},w)_{\Omega_p}\\
         & - (\bm{\varphi},s_2\bm{\tau}_\gamma)_\gamma + (\bm{v},s_2\bm{\tau}_\gamma)_\gamma  - (w,\mu)_\gamma + (w,\mu)_\gamma + \overline{\epsilon} (\mu,\mu)_{1/2,\gamma}\\
        &= \rho_f ||\bm{v}||_{0,\Omega_f}^2 + 2\nu_f \Delta t ||D(\bm{v})||_{0,\Omega_f}^2 + \rho_p ||\bm{\varphi}||_{0,\Omega_p}^2 
     + 2\nu_p \Delta t^2 ||D(\bm{\varphi})||_{0,\Omega_p}^2 
         \\ & \hspace{3mm} +  \Delta t^2 \lambda ||\nabla \cdot \bm{\varphi}||_{0,\Omega_p}^2    + \frac{s_0}{\Delta t^2} ||w||_{0,\Omega_p}^2  + \frac{\kappa}{\Delta t} ||\nabla w||_{0,\Omega_p}^2 + \frac{1}{\beta \Delta t}||s_2||_{0,\gamma}^2 + \overline{\epsilon} ||\mu||_{1/2,\gamma}^2 \\
        &\geq \min\{\rho_f, 2\nu_f \Delta t\} ||\bm{v}||^2_{1,\Omega_f} + \min\{\rho_p, 2\nu_p \Delta t^2\} ||\bm{\varphi}||_{1,\Omega_p}^2 \\
        &\hspace{3mm} + \min\Big\{\frac{s_0}{\Delta t^2},\frac{\kappa}{\Delta t}\Big\} ||w||_{1,\Omega_p}^2 + \frac{1}{\beta \Delta t} ||s_2||_{0,\gamma}^2 + \overline{\epsilon} ||\mu||_{1/2,\gamma}^2 \\
         &\geq \alpha_1 || \bm{m}||_M^2
    \end{align*}
    where the inequality assumes that all parameters are strictly positive, and \\
    $\alpha_1 := \min\big\{\rho_f, 2\nu_f \Delta t, \rho_p, 2\nu_p \Delta t^2, \dfrac{s_0}{\Delta t^2}, \dfrac{\kappa}{\Delta t}, \dfrac{1}{\beta \Delta t}, \overline{\epsilon} \big\}$.
\end{proof}

\begin{theorem}
    There exists a unique solution to \eqref{FPSI_FEM:MF4toUse}. 
\end{theorem}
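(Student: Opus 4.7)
The plan is to invoke the classical Brezzi theorem for saddle point problems (see Brezzi and Fortin, or the reference cited as \cite{Brezzi_1990}), which guarantees existence and uniqueness for systems of the form \eqref{FPSI_FEM:MF4toUse} under four hypotheses: (i) continuity of $a_M(\cdot,\cdot)$ on $M \times M$, (ii) continuity of $b_{MZ}(\cdot,\cdot)$ on $M \times Z$, (iii) coercivity of $a_M(\cdot,\cdot)$ on the kernel of $b_{MZ}(\cdot,\cdot)$ in $M$, and (iv) the inf-sup condition for $b_{MZ}(\cdot,\cdot)$ between $M$ and $Z$, along with continuity of the right-hand side functionals.

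Most of these hypotheses are already established or nearly so. Hypothesis (iii) follows immediately from Lemma \ref{FPSI_FEM:lem:AM_ell}, which actually gives the stronger result of coercivity on the entire space $M$ and therefore restricts trivially to the kernel. Hypothesis (iv) is exactly Theorem \ref{FPSI_FEM:lem:Binfsup}. For the continuity of $\mathcal{F}_M$ and $\mathcal{F}_3$, each component is a duality pairing or $L^2$ inner product against a fixed datum (forcing function, Neumann boundary value, or solution from a previous time step), so Cauchy–Schwarz combined with the trace theorem bounds these functionals in terms of $\|\bm{\zeta}\|_M$ and $\|\bm{\xi}\|_Z$ respectively.

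What remains is the verification of continuity for $a_M$ and $b_{MZ}$, which I would carry out term by term. For $a_M$, the volumetric terms in $a_1$, $a_2$, $a_g$, $b_{pp}$, and $b_{\gamma,2}$ are controlled directly by Cauchy–Schwarz in the relevant $L^2$ or $H^1$ norms, while the interface terms involving $b_{\gamma,2}$ and $b_2$ require the trace inequality $\|v|_\gamma\|_{1/2,\gamma} \leq C\|v\|_{1,\Omega}$ to pass from the volumetric norms in $Y$ to the interface norms. The stabilization term $a_\lambda(\cdot,\cdot)$ is an $H^{1/2}(\gamma)$ inner product, so its continuity is immediate. For $b_{MZ}$, the term $b_1$ combines the $L^2$ pairing $b_{pf}$ with the duality pairing $b_{\gamma,1}$ between $H^{1/2}(\gamma)$ (via the trace of functions in $U$ or $X$) and $H^{-1/2}(\gamma)$, while $b_{LM}$ is directly the duality pairing between $\Lambda_{g1} = H^{-1/2}(\gamma)$ and $\Lambda_\lambda = H^{1/2}(\gamma)$.

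The proof then concludes by a single invocation of the Brezzi theorem: coercivity of $a_M$ on the full space $M$ (hence on the kernel), the inf-sup condition on $b_{MZ}$, and the continuity properties together imply the existence of a unique pair $(\bm{m}^{n+1}, \bm{z}^{n+1}) \in M \times Z$ solving \eqref{FPSI_FEM:MF4toUse}. I do not anticipate any substantive obstacle here, as the main analytical work has been front-loaded into Theorem \ref{FPSI_FEM:lem:Binfsup} and Lemma \ref{FPSI_FEM:lem:AM_ell}; the only mild subtlety is ensuring that the choice of norms on $M$ and $Z$ (in particular the $H^{1/2}(\gamma)$ norm on $\Lambda_\lambda$ and the $H^{-1/2}(\gamma)$ norm on $\Lambda_{g1}$) is compatible with every continuity estimate, which is the reason the stabilization $\overline{\epsilon} > 0$ had to be introduced in MF(4).
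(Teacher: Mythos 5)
Your proposal matches the paper's proof exactly: both invoke the Brezzi saddle point theorem, citing Theorem \ref{FPSI_FEM:lem:Binfsup} for the inf-sup condition on $b_{MZ}$ and Lemma \ref{FPSI_FEM:lem:AM_ell} for coercivity of $a_M$ on all of $M$ (hence on the kernel). The only difference is that you spell out the continuity checks for $a_M$, $b_{MZ}$, $\mathcal{F}_M$, and $\mathcal{F}_3$, which the paper dispatches as ``straightforward''; this is a welcome elaboration, not a different route.
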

\begin{proof}
Combining the results of Lemma \ref{FPSI_FEM:lem:Binfsup}, which showed the inf-sup condition for $b_{MZ}(\cdot,\cdot)$ between $M$ and $Z$, and Lemma \ref{FPSI_FEM:lem:AM_ell}, which showed the coercivity of $a_M(\cdot,\cdot)$ with the added stabilization term $\overline{\epsilon} (\lambda_p^{n+1},\mu)_{1/2,\gamma}$ , we see that the saddle point system \eqref{FPSI_FEM:MF4toUse} has a unique solution.  
\end{proof}

	\section{Fully Discrete Model}\label{FPSI_FEM:sec:FullyDiscreteModelUpdates}

Next, we turn to the analysis of the fully discretized form of \eqref{FPSI_FEM:WF:allTimeDisc}. We assume here that $\Omega_f, \Omega_p$ are convex polytopal domains. Let $h_1, h_2, \text{and } h_\gamma$ represent the mesh sizes of a quasi-uniform partition of $\Omega_f, \Omega_p, \text{and } \gamma$. The conforming discrete finite element spaces are denoted by $U^{h_1} \subset U$, $Q_f^{h_1} \subset Q_f$, $X^{h_2} \subset X$, $Q_p^{h_2} \subset Q_p$,  $\Lambda_{g2}^{h_\gamma} \subset \Lambda_{g2} $, $\Lambda_\lambda^{h_\gamma} \subset \Lambda_\lambda$, and $\Lambda_{g1}^{h_\gamma} \subset \Lambda_{g1}$ for the variables $\bm{u}, p_f, \bm{\eta}, p_p, g_2$, $\lambda_p$, and $g_1$.

\noindent Define the following space $U^{h_1}_0 := \{ \bm{u}_h \in U^{h_1} \ \big| \ \bm{u}_h \big|_\gamma = 0 \}$. We assume that $U^{h_1}_0, Q_f^{h_1}$ satisfy the discrete inf-sup condition for the traditional Stokes problem:
\begin{equation}\label{FPSI_FEM:eq:InfSup_Uh_Qh}
\underset{0\neq q_h \in Q_f^{h_1}}{\inf} \underset{0\neq \bm{v}_h \in U_0^{h_1}}{\sup} \frac{(\nabla \cdot \bm{v}_h, q_h)}{||\bm{v}_h||_1||q_h||_0} \geq \beta^* > 0.
\end{equation}

Posing \eqref{FPSI_FEM:WF:allTimeDisc} over the discrete finite element spaces, we group variables in the same way as in the continuous case, defining $U^{h_1} \times X^{h_2} \times Q_p^{h_2} \times \Lambda_{g2}^{h_\gamma} := Y^h \subset Y$, and $Y^h \times \Lambda_\lambda^{h_\gamma} := M^h \subset M$ with norms inherited from $Y$ and $M$. Likewise $Q_f^{h_1} \times \Lambda_{g1}^{h_\gamma} := Z^h \subset Z$, with the $Z$ norm retained as well. This yields the discrete equivalence of  \eqref{FPSI_FEM:MF4toUse}:

\noindent \textit{Find $\bm{m}_h^{n+1} := (\bm{u}_h^{n+1},\widehat{\bm{\eta}}_h^{\ n+1},\widehat{p}_{p,h}^{\ n+1},\widehat{g}_{2,h}^{\ n+1}, \lambda_{p,h}^{n+1}) \in M^h$, $ \bm{z}_h^{n+1} := (\widehat{p}_{f,h}^{\ n+1},\widehat{g}_{1,h}^{\ n+1}) \in Z^h$ such that:}
\begin{align}\label{FPSI_FEM:discreteSaddlePoint}
    \begin{split}
        a_M\left( \bm{m}_h^{n+1}, \bm{\zeta}_h \right) +b_{MZ}(\bm{\zeta}_h,\bm{z}_h^{n+1}) 
        &= \mathcal{F}_M(\bm{\zeta}_h) \hspace{5mm} \forall \hspace{1mm} \bm{\zeta}_h := (\bm{v}_h,\bm{\varphi}_h,w_h,s_{2,h},\mu_h) \in M^h,\\
b_{MZ}(\bm{m}_h^{n+1},\bm{\xi}_h) &= -\mathcal{F}_3(q_h,s_{1,h}) \hspace{5mm} \forall \hspace{1mm} \bm{\xi}_h := (q_h,s_{1,h}) \in Z^h .\\
    \end{split}
\end{align}

To show the well posedness of \eqref{FPSI_FEM:discreteSaddlePoint}, we will show the inf-sup condition between $M^h$ and $Z^h$. The coercivity of $a_M(\cdot,\cdot)$ on $M^h$ is inherited from the coercivity on $M$.

\begin{theorem}\label{FPSI_FEM:thm:DiscInfSup}
There exists a positive constant $\beta_2$ such that
\begin{align}\label{FPSI_FEM:infsup-disc}
\underset{0 \neq \bm{\zeta}_h \in M^h}{\sup} \frac{b_{MZ}(\bm{\zeta}_h,\bm{\xi}_h)}{||\bm{\zeta}_h||_{M}} \geq \beta_2 ||\bm{\xi}_h||_{Z} > 0 \hspace{3mm} \forall \hspace{1mm} \bm{\xi}_h \in Z^h.
\end{align}
\end{theorem}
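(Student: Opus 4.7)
The plan is to adapt the proof of Theorem \ref{FPSI_FEM:lem:Binfsup} to the discrete setting. Given $\widetilde{\bm{\xi}}_h = (\widetilde{q}_h, \widetilde{s}_{1,h}) \in Z^h$, I will construct a test element $\widetilde{\bm{m}}_h = (\widetilde{\bm{u}}_h, \widetilde{\bm{\eta}}_h, 0, 0, \mu_h^*) \in M^h$ whose $M$-norm is controlled by a constant times $\|\widetilde{\bm{\xi}}_h\|_Z$ and for which $b_{MZ}(\widetilde{\bm{m}}_h, \widetilde{\bm{\xi}}_h)$ is bounded below by $\|\widetilde{\bm{\xi}}_h\|_Z^2$ (up to a constant). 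Once the test element is built, the algebraic manipulations that close the estimate are identical to those at the end of the continuous proof.

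First, produce $\mu_h^* \in \Lambda_\lambda^{h_\gamma}$ as the discrete Riesz representative of $\widetilde{s}_{1,h}$, defined by $(\mu_h^*, \theta_h)_{1/2,\gamma} = \langle \widetilde{s}_{1,h}, \theta_h \rangle_\gamma$ for all $\theta_h \in \Lambda_\lambda^{h_\gamma}$; this is well-posed because $\Lambda_\lambda^{h_\gamma}$ is a finite-dimensional Hilbert subspace of $H^{1/2}(\gamma)$. In order to inherit the norm identity $\|\mu_h^*\|_{1/2,\gamma} \sim \|\widetilde{s}_{1,h}\|_{-1/2,\gamma}$ used crucially in \eqref{FPSI_FEM:eq:RieszRepContinuous}, I will need a compatibility assumption between $\Lambda_{g1}^{h_\gamma}$ and $\Lambda_\lambda^{h_\gamma}$, namely a discrete inf-sup for the duality pairing restricted to these two spaces. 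Next, construct $\widetilde{\bm{u}}_h \in U^{h_1}$ and $\widetilde{\bm{\eta}}_h \in X^{h_2}$ satisfying the discrete analogues of \eqref{FPSI_FEM:eq:UTrace_InfSup} and \eqref{FPSI_FEM:eq:EtaTrace_InfSup}: namely, $\widetilde{\bm{u}}_h|_\gamma = -\mu_h^* \bm{n}_f$, $(\nabla \cdot \widetilde{\bm{u}}_h, q_h)_{\Omega_f} = -(\widetilde{q}_h, q_h)_{\Omega_f}$ for every $q_h \in Q_f^{h_1}$, and analogously for $\widetilde{\bm{\eta}}_h$ with a homogeneous divergence condition against $Q_p^{h_2}$. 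I would do this in two steps: first, use a discrete lifting of $-\mu_h^* \bm{n}_f$ from $\gamma$ into $U^{h_1}$ (and similarly for $X^{h_2}$), bounded in $\bm{H}^1$-norm by $\|\mu_h^*\|_{1/2,\gamma}$; second, correct the divergence using the discrete Stokes inf-sup \eqref{FPSI_FEM:eq:InfSup_Uh_Qh} on the subspace $U^{h_1}_0$ of functions vanishing on $\gamma$, which leaves the trace unchanged.

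With the test element assembled, plug $\widetilde{\bm{m}}_h$ into $b_{MZ}(\cdot, \widetilde{\bm{\xi}}_h)$; the evaluation reproduces term-by-term the calculation in the continuous proof because the prescribed trace and divergence identities hold at the discrete level. Combining the resulting lower bound $\|\mu_h^* \bm{n}_p\|^2_{1/2,\gamma} + \|\mu_h^* \bm{n}_f\|^2_{1/2,\gamma} + \|\widetilde{q}_h\|^2_{0,\Omega_f} + \|\mu_h^*\|^2_{1/2,\gamma}$ with the discrete analog of \eqref{FPSI_FEM:eq:UEMU_bound} yields the discrete inf-sup constant $\beta_2$.

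The main obstacle is securing the two discrete compatibility properties: (i) the duality inf-sup between $\Lambda_{g1}^{h_\gamma}$ and $\Lambda_\lambda^{h_\gamma}$ needed for the Riesz identification, and (ii) a discrete trace-lifting from $\Lambda_\lambda^{h_\gamma}$ into $U^{h_1}$ and $X^{h_2}$ compatible with the Stokes discrete inf-sup. Both are standard requirements for mortar/Lagrange-multiplier discretizations and are usually achieved by choosing $\Lambda_\lambda^{h_\gamma}$ and $\Lambda_{g1}^{h_\gamma}$ on a sufficiently coarse interface mesh relative to $h_1$ and $h_2$; the theorem statement should therefore either add these as explicit assumptions on the finite element spaces or verify them for the specific pairs used in the numerics. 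Given these, the rest of the proof is a discrete transcription of Theorem \ref{FPSI_FEM:lem:Binfsup}.
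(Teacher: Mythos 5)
Your approach is conceptually sound but takes a genuinely different — and in fact more demanding — route than the paper. You mimic the continuous proof by constructing a discrete Riesz representative $\mu_h^* \in \Lambda_\lambda^{h_\gamma}$ of $\widetilde{s}_{1,h}$ and then lifting it into the discrete velocity and displacement spaces, which forces you to postulate two extra compatibility hypotheses: a discrete inf-sup for the duality pairing between $\Lambda_{g1}^{h_\gamma}$ and $\Lambda_\lambda^{h_\gamma}$ (so that $\|\mu_h^*\|_{1/2,\gamma}$ controls $\|\widetilde{s}_{1,h}\|_{-1/2,\gamma}$ from below, not merely from above, which is the only direction the finite-dimensional Riesz map gives for free), and a discrete lifting that reproduces the boundary trace $-\mu_h^*\bm{n}_f$ exactly while remaining compatible with the interior Stokes inf-sup. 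You correctly flag these as assumptions that would have to be added. The paper's proof sidesteps both by never routing through $\Lambda_\lambda^{h_\gamma}$ in that way: it first establishes discrete inf-sup conditions \emph{directly} between $U^{h_1}$ (and between the tangentially-constrained subspace $X_\tau^h$) and $\Lambda_{g1}^{h_\gamma}$, using the dual-norm characterization on $\gamma$ together with a Fortin-type interpolant $\mathcal{I}_f^h$ (resp.\ $\mathcal{I}_s^h$) that preserves the duality pairing against $\Lambda_{g1}^{h_\gamma}$ and is $\bm{H}^1$-stable — requiring only that $\Lambda_{g1}^{h_\gamma}$ be piecewise polynomial on the interface. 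It then picks $\bm{u}_{1,h}$, $\bm{\varphi}_h$ realizing those inf-sup bounds, corrects the divergence with $\widehat{C}\bm{u}_{2,h}$ drawn from $U_0^{h_1}$ via \eqref{FPSI_FEM:eq:InfSup_Uh_Qh}, and sets $\mu_h := (\bm{\varphi}_h\cdot\bm{n}_p)|_\gamma$, the normal trace of the already-constructed $\bm{\varphi}_h$, rather than any Riesz representative. With this choice the $\bm{\varphi}_h$- and $\mu_h$-terms in $b_{MZ}(\bm{m}_h,\bm{\xi}_h)$ cancel and the lower bound is carried entirely by the $\bm{u}_h$-part; the only compatibility needed beyond the Fortin operator is that the normal trace of $X^{h_2}$-functions lies in $\Lambda_\lambda^{h_\gamma}$, a milder and more routinely satisfied condition than a duality inf-sup between the two LM spaces. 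So while your plan could be made rigorous by adding the hypotheses you identify, the paper reaches the same conclusion under lighter assumptions, and the final algebra is not a verbatim transcription of Theorem \ref{FPSI_FEM:lem:Binfsup} — the structure of the test element and the way the $b_{LM}$ term is absorbed differ materially.
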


\begin{proof}
    Let $\bm{\xi}_h := (q_h, s_{1,h}) \in Z^h$ be given. We begin by showing an inf-sup condition between $U^{h_1}$ and $\Lambda_{g1}^{h_\gamma}$. For $\bm{k} \in \bm{H^{1/2}}(\gamma)$, we may find $\bm{v}_k \in U$ s.t. 
\begin{align}\label{vkConds}
\begin{split}
    \bm{v}_k \big|_\gamma &= \bm{k}\\
||\bm{v}_k||_1 &\leq C_1 || \bm{k}||_{1/2,\gamma},
\end{split}
\end{align}
where $C_1$ represents the constant from the lifting operator from $H^{1/2}(\gamma)$ to $H^1(\Omega_f).$ Since $s_{1,h} \in \Lambda_{g1}^{h_\gamma}$, and thus $s_{1,h} \bm{n}_f \in \bm{H^{-1/2}}(\gamma)$, by \eqref{vkConds} and the definition of the dual norm,
\begin{align}\label{sh_bound}
\begin{split}
    ||s_{1,h} \bm{n}_f||_{-1/2,\gamma} &= \underset{\bm{k} \in \bm{H^{1/2}}(\gamma)}{\sup} \frac{\langle s_{1,h} \bm{n}_f,\bm{k}\rangle_\gamma}{||\bm{k}||_{1/2,\gamma}} \leq \underset{\bm{k} \in \bm{H^{1/2}}(\gamma)}{\sup} \frac{C_1 \langle s_{1,h} \bm{n}_f,\bm{v}_k\rangle_\gamma}{||\bm{v}_k||_{1}} \\
    &\leq  \underset{\bm{v}_k \in U}{\sup} \frac{C_1 \langle s_{1,h} \bm{n}_f,\bm{v}_k\rangle_\gamma}{||\bm{v}_k||_{1}} .
    \end{split}
\end{align}
Now, we note that there exists an interpolant $\mathcal{I}_f^h : U \rightarrow U^h$ which satisfies the following two conditions for $\bm{v}_k \in U$ and $\bm{v}_k^h \in U^h$ (\cite{Boffi_2013}, Section 2.5): 
\begin{align}\label{ProjectorConds}
\langle s_{1,h} \bm{n}_f, \bm{v}_k\rangle_\gamma &= \langle s_{1,h} \bm{n}_f, \mathcal{I}_f^h\bm{v}_k\rangle_\gamma, \quad \forall \ s_{1,h} \in \Lambda_{g1}^{h_\gamma} \\
||\mathcal{I}_f^h \bm{v}_k||_1 &\leq C_2 || \bm{v}_k||_1.
\end{align}
The interpolant requires $\Lambda_{g1}^{h_\gamma} \subset L^2$ on each element; the use of $P_k$ polynomials for this LM space satisfies this condition.
\noindent With these properties, we have for a given $\bm{v}_k \in U$ and $s_{1,h}  \in \Lambda_{g1}^{h_\gamma}$
\begin{align*}
 \frac{C_1 \langle s_{1,h} \bm{n}_f,\bm{v}_k\rangle_\gamma}{||\bm{v}_k||_{1}}  &=   \frac{C_1 \langle s_{1,h} \bm{n}_f,\mathcal{I}_f^h\bm{v}_k\rangle_\gamma}{||\bm{v}_k||_{1}}  \leq  \frac{C_1 C_2 \langle s_{1,h} \bm{n}_f,\mathcal{I}_f^h\bm{v}_k\rangle_\gamma}{||\mathcal{I}_f^h\bm{v}_k||_{1}} \leq \underset{\bm{v}_k^h \in U^h}{\sup}\frac{C_1 C_2 \langle s_{1,h} \bm{n}_f,\bm{v}_k^h\rangle_\gamma}{||\bm{v}_k^h||_{1}}.
\end{align*}
Taking the supremum over $\bm{v}_k \in U$ of this inequality yields:
\begin{align*}
\underset{\bm{v}_k \in U}{\sup}  \frac{C_1 \langle s_{1,h} \bm{n}_f,\bm{v}_k\rangle_\gamma}{||\bm{v}_k||_{1}} \leq \underset{\bm{v}_k \in U}{\sup} \underset{\bm{v}_k^h \in U^h}{\sup}\frac{C_1 C_2 \langle s_{1,h} \bm{n}_f,\bm{v}_k^h\rangle_\gamma}{||\bm{v}_k^h||_{1}} = \underset{\bm{v}_k^h \in U^h}{\sup}\frac{C_1 C_2 \langle s_{1,h} \bm{n}_f,\bm{v}_k^h\rangle_\gamma}{||\bm{v}_k^h||_{1}}.
\end{align*}

\noindent Thus, combining \eqref{sh_bound} with the above, we have the desired inf-sup condition:
\begin{align*}
||s_{1,h} \bm{n}_f||_{-1/2,\gamma} \leq  \underset{\bm{v}_k \in  U}{\sup} \frac{C_1 \langle s_{1,h} \bm{n}_f,\bm{v}_k\rangle_\gamma}{||\bm{v}_k||_{1}} \leq C_1 C_2\underset{\bm{v}_k^h \in U^h}{\sup}\frac{ \langle s_{1,h} \bm{n}_f,\bm{v}_k^h\rangle_\gamma}{||\bm{v}_k^h||_{1}}.
\end{align*}
Rewritten,
\begin{align}\label{infsup_LM_Stokes}
\underset{\bm{v}_k^h \in U^h}{\sup}\frac{ \langle s_{1,h} \bm{n}_f,\bm{v}_k^h\rangle_\gamma}{||\bm{v}_k^h||_{1}} \geq \beta_f ||s_{1,h} \bm{n}_f||_{-1/2,\gamma} \quad \forall s_{1,h}  \in \Lambda_{g1}^{h_\gamma}.
\end{align}

\noindent Similarly, we may show the inf-sup condition between a subspace of $X^{h_2}$ and $\Lambda_{g1}^{h_\gamma}$. For $\bm{k} \in \bm{H^{1/2}}(\gamma)$, we may find $\bm{\varphi}_k \in X$ such that $\bm{\varphi}_k\big|_\gamma = \bm{k}$ and $(\bm{\varphi}_k\cdot \bm{\tau}_\gamma)\big|_\gamma = 0$. I.e., 
\begin{align}\label{phikConds}
\begin{split}
\bm{\varphi}_k\big|_\gamma &= (\bm{\varphi}_k \cdot \bm{n}_p)\big|_\gamma \bm{n}_p + (\bm{\varphi}_k \cdot \bm{\tau}_\gamma)\big|_\gamma \bm{\tau}_\gamma = (\bm{\varphi}_k \cdot \bm{n}_p)\big|_\gamma \bm{n}_p = \bm{k}\\
||\bm{\varphi}_k||_1 &\leq C_3 ||\bm{k}||_{1/2,\gamma}.
\end{split}
\end{align}

\noindent Let $s_{1,h} \in \Lambda_{g1}^{h_\gamma}$, then by \eqref{phikConds} and the definition of the dual norm,
\begin{align}\label{sh_bound_phi}
\begin{split}
    ||s_{1,h} \bm{n}_p||_{-1/2,\gamma} &= \underset{\bm{k} \in \bm{H^{1/2}}(\gamma)}{\sup} \frac{\langle s_{1,h} \bm{n}_p,\bm{k}\rangle_\gamma}{||\bm{k}||_{1/2,\gamma}} \leq \underset{\bm{k} \in \bm{H^{1/2}}(\gamma)}{\sup} \frac{C_3 \langle s_{1,h} \bm{n}_p,\bm{\varphi}_k\rangle_\gamma}{||\bm{\varphi}_k||_{1}} \\
    &\leq  \underset{\bm{\varphi}_k \in X}{\sup} \frac{C_3 \langle s_{1,h} \bm{n}_p,\bm{\varphi}_k\rangle_\gamma}{||\bm{\varphi}_k||_{1}} .
    \end{split}
\end{align}

\noindent Define the following subspace of $X^{h_2}$:
\begin{equation}
    X_\tau^h := \{ \bm{\varphi}^h \in X^{h_2} \ \Big| \ \bm{\varphi}^h \cdot \bm{\tau} = 0 \ \text{ on } \gamma \}.
\end{equation}
The interpolant $\mathcal{I}_s^h : X \rightarrow X^h$ satisfies the following conditions for $\bm{\varphi}_k \in X$ and $\bm{\varphi}_k^h \in X_\tau^h$ (\cite{Boffi_2013}): 
\begin{align}\label{ProjectorConds_phi}
\langle s_{1,h} \bm{n}_p, \bm{\varphi}_k\rangle_\gamma &= \langle s_{1,h} \bm{n}_p, \mathcal{I}_s^h\bm{\varphi}_k\rangle_\gamma, \quad \forall \ s_{1,h} \in \Lambda_{g1}^{h_\gamma} \\
||\mathcal{I}_s^h \bm{\varphi}_k||_1 &\leq C_4 || \bm{\varphi}_k||_1.
\end{align}

\noindent With these two properties, we have for a given $\bm{\varphi}_k \in X$ and $s_{1,h} \in \Lambda_{g1}^{h_\gamma}$
\begin{align*}
 \frac{C_3 \langle s_{1,h} \bm{n}_p,\bm{\varphi}_k\rangle_\gamma}{||\bm{\varphi}_k||_{1}}  &=   \frac{C_3 \langle s_{1,h} \bm{n}_p,\mathcal{I}_s^h\bm{\varphi}_k\rangle_\gamma}{||\bm{\varphi}_k||_{1}}  \leq  \frac{C_3 C_4 \langle s_{1,h} \bm{n}_p,\mathcal{I}_s^h\bm{\varphi}_k\rangle_\gamma}{||\mathcal{I}_s^h\bm{\varphi}_k||_{1}} \\
 &\leq \underset{\bm{\varphi}_k^h \in X_\tau^h}{\sup}\frac{C_3 C_4 \langle s_{1,h} \bm{n}_p,\bm{\varphi}_k^h\rangle_\gamma}{||\bm{\varphi}_k^h||_{1}}.
\end{align*}
Taking the supremum over $\bm{\varphi}_k \in X$ of this inequality yields:
\begin{align*}
\underset{\bm{\varphi}_k \in X}{\sup}  \frac{C_3 \langle s_{1,h} \bm{n}_p,\bm{\varphi}_k\rangle_\gamma}{||\bm{\varphi}_k||_{1}} \leq \underset{\bm{\varphi}_k \in X}{\sup} \underset{\bm{\varphi}_k^h \in X_\tau^h}{\sup}\frac{C_3 C_4 \langle s_{1,h} \bm{n}_p,\bm{\varphi}_k^h\rangle_\gamma}{||\bm{\varphi}_k^h||_{1}} = \underset{\bm{\varphi}_k^h \in X_\tau^h}{\sup}\frac{C_3 C_4 \langle s_{1,h} \bm{n}_p,\bm{\varphi}_k^h\rangle_\gamma}{||\bm{\varphi}_k^h||_{1}}.
\end{align*}

\noindent Thus, combining with \eqref{sh_bound_phi}, we have the desired inf-sup condition:
\begin{align*}
||s_{1,h} \bm{n}_p||_{-1/2,\gamma} \leq  \underset{\bm{\varphi}_k \in  X}{\sup} \frac{C_3 \langle s_{1,h} \bm{n}_p,\bm{\varphi}_k\rangle_\gamma}{||\bm{\varphi}_k||_{1}} \leq C_3 C_4\underset{\bm{\varphi}_k^h \in X_\tau^h}{\sup}\frac{ \langle s_{1,h} \bm{n}_p,\bm{\varphi}_k^h\rangle_\gamma}{||\bm{\varphi}_k^h||_{1}}.
\end{align*}
Rewritten, 
\begin{align}\label{infsup_LM_Structure}
 \underset{\bm{\varphi}_k^h \in X_\tau^h}{\sup}\frac{ \langle s_{1,h} \bm{n}_p,\bm{\varphi}_k^h\rangle_\gamma}{||\bm{\varphi}_k^h||_{1}} \geq  \beta_s||s_{1,h} \bm{n}_p||_{-1/2,\gamma} \quad \forall s_{1,h}  \in \Lambda_{g1}^{h_\gamma}.
\end{align}
Now, we combine the inf-sup conditions for each piece to show the inf-sup condition for the term $b_{MZ}(\cdot,\cdot)$.  By \eqref{infsup_LM_Stokes}, there exists $\bm{u}_{1,h} \in U^{h_1}$ such that 
\begin{align}\label{V2_Prop}
||\bm{u}_{1,h}||_1 = 1 \quad \text{ and } \quad -\langle \bm{u}_{1,h},s_{1,h} \bm{n}_f\rangle_\gamma \geq \beta_f ||s_{1,h} \bm{n}_f||_{-1/2,\gamma} =\beta_f ||s_{1,h}||_{-1/2,\gamma}.
\end{align}

\noindent Likewise, by \eqref{FPSI_FEM:eq:InfSup_Uh_Qh}, there exists $\bm{u}_{2,h} \in U_0^{h_1}$ with
\begin{align}\label{V1_Prop}
||\bm{u}_{2,h}||_1 = 1 \quad \text{ and } \quad -(\nabla \cdot \bm{u}_{2,h},q_h) \geq \beta^* ||q_h||_{0}.
\end{align}

\noindent Lastly, by \eqref{infsup_LM_Structure}, there exists $\bm{\varphi}_h \in X_\tau^h$ such that  
\begin{align}\label{Varphi_Prop}
||\bm{\varphi}_h||_1 = 1 \quad \text{ and } \quad \langle \bm{\varphi}_h,s_{1,h} \bm{n}_p\rangle_\gamma \geq \beta_s ||s_{1,h} \bm{n}_p||_{-1/2,\gamma} =\beta_s ||s_{1,h}||_{-1/2,\gamma}.
\end{align}

\noindent By the Cauchy-Schwarz inequality and Korn's inequality \eqref{FPSI_FEM:eq:TraceIneqs},
\begin{equation}\label{UseCS}
\big|(\nabla \cdot \bm{u}_{1,h},q_h)\big| \leq ||\nabla \cdot \bm{u}_{1,h}||_{0} \ ||q_h||_{0} \leq \sqrt{2}C_K ||D( \bm{u}_{1,h})||_0 \ ||q_h||_0 \leq  \sqrt{2}C_K ||q_h||_0.
\end{equation}
Define $\mu_h \in \Lambda_\lambda^h$ as $\mu_h := (\bm{\varphi}_h \cdot \bm{n}_p)\big|_\gamma .$
Then, recalling that $\bm{\varphi}^h \in X_\tau^h$ implies that $(\bm{\varphi}_h \cdot \bm{\tau})\big|_\gamma = 0$,
\begin{align*}
||\mu_h||_{1/2,\gamma} = ||\mu_h \bm{n}_p||_{1/2,\gamma} = ||(\bm{\varphi}_h \cdot \bm{n}_p)\bm{n}_p||_{1/2,\gamma} = \underset{\underset{\bm{\eta}|_\gamma= (\bm{\varphi}_h\cdot \bm{n}_p) \bm{n}_p}{\bm{\eta}\in X } }{\inf} ||\bm{\eta}||_1 \leq || \bm{\varphi}_h||_1 = 1.
\end{align*}

\noindent Define $\widehat{C}:=(1 + \dfrac{\sqrt{2}C_K}{\beta^*})$. With $\bm{u}_h := \bm{u}_{1,h} + \widehat{C} \bm{u}_{2,h} \in U^{h_1}$, let the element $(\bm{u}_h,\bm{\varphi}_h,0,0,\mu_h) = \bm{m}_h \in M^h$. Applying \eqref{V2_Prop}-\eqref{UseCS} in the following yields
\begin{align*}
&b_{MZ}(\bm{m}_h, (q_h,s_{1,h})) = -(\nabla \cdot \bm{u}_h, q_h) - \langle \bm{u}_h,s_{1,h} \bm{n}_f\rangle_\gamma - \langle \bm{\varphi}_h,s_{1,h} \bm{n}_p\rangle_\gamma + \langle s_{1,h}, \mu_h \rangle_\gamma \\
&= -(\nabla \cdot \bm{u}_{1,h}, q_h) -\widehat{C} (\nabla \cdot \bm{u}_{2,h}, q_h) - \langle \bm{u}_{1,h},s_{1,h} \bm{n}_f\rangle_\gamma - \widehat{C} \langle \bm{u}_{2,h},s_{1,h} \bm{n}_f\rangle_\gamma \\
&\geq -\sqrt{2}C_K ||q_h||_0 + \widehat{C}\beta^*||q_h||_0+ \beta_f ||s_{1,h} ||_{-1/2,\gamma}\\
&\geq \min\{ \beta^*, \beta_f\} (||q_h||_0 + ||s_{1,h} ||_{-1/2,\gamma}).
\end{align*}

\noindent Note that 
\begin{align*}
||\bm{u}_h||_1 + ||\bm{\varphi}_h||_1 + || \mu_h ||_{1/2,\gamma} \leq ||\bm{u}_{1,h}||_1 + \widehat{C} || \bm{u}_{2,h}||_1 + || \bm{\varphi}_h||_1 + ||\mu_h||_{1/2,\gamma} \leq 4 +  \frac{\sqrt{2}C_K}{\beta^*} ,
\end{align*}
which implies  $$ \frac{||\bm{u}_h||_1 + ||\bm{\varphi}_h||_1 + ||\mu_h||_{1/2,\gamma}}{4 +  \frac{\sqrt{2}C_K}{\beta^*}} \leq 1.$$

\noindent Thus, we have that
\begin{align*}
&\underset{0 \neq \bm{\zeta}_h \in M^h}{\sup} \frac{b_{MZ}(\bm{\zeta}_h,(q_h,s_{1,h}))}{||\bm{\zeta}_h||_{M}} \geq \frac{b_{MZ}(\bm{m}_h, (q_h,s_{1,h}))}{||\bm{m}_h||_M} \geq \min\{\beta^*, \beta_f\} \frac{ ||q_h||_0 + ||s_{1,h} ||_{-1/2,\gamma} }{||\bm{m}_h||_M} \\
&\geq   \frac{\min\{\beta^*, \beta_f\}}{4 +   \frac{\sqrt{2}C_K}{\beta^*}}\frac{ \left(||\bm{u}_h||_1 + ||\bm{\varphi}_h||_1 + ||\mu_h||_{1/2,\gamma}\right) \left( ||q_h||_0 + ||s_{1,h} ||_{-1/2,\gamma} \right)}{||\bm{m}_h||_M}\\
&\geq  \frac{\min\{\beta^*, \beta_f\}}{4 +   \frac{\sqrt{2}C_K}{\beta^*}} \frac{\left(||\bm{u}_h||_1^2 + ||\bm{\varphi}_h||_1^2 + ||\mu_h||_{1/2,\gamma}^2\right)^{1/2} \left( ||q_h||_0^2 + ||s_{1,h} ||_{-1/2,\gamma}^2 \right)^{1/2}}{\left(||\bm{u}_h||_1^2 + ||\bm{\varphi}_h||_1^2 + ||\mu_h||_{1/2,\gamma}^2\right)^{1/2}}\\
&= \frac{\min\{\beta^*, \beta_f\}}{4 +   \frac{\sqrt{2}C_K}{\beta^*}}  ||(q_h,s_{1,h} )||_Z.
\end{align*}

\end{proof}
\noindent With the discrete inf-sup condition proved in Theorem \ref{FPSI_FEM:thm:DiscInfSup}, the inherited coercivity of $a_M(\cdot, \cdot)$ on $M^h$ results in the well-posedness of the fully discrete saddle point system \eqref{FPSI_FEM:discreteSaddlePoint} (\cite{Brezzi_1990}).
\begin{theorem} \label{FPSI_FEM:fully-discrete-well-posedness}
The fully discrete system \eqref{FPSI_FEM:discreteSaddlePoint} has a unique solution\\ $\Big((\bm{u}_h^{n+1},\widehat{\bm{\eta}}_h^{\ n+1},\widehat{p}_{p,h}^{\ n+1},\widehat{g}_{2,h}^{\ n+1}, \lambda_{p,h}^{n+1}),(\widehat{p}_{f,h}^{\ n+1},\widehat{g}_{1,h}^{\ n+1})\Big) \in M^h \times Z^h.$
\end{theorem}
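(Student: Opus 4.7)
The proof is essentially a direct invocation of the standard Brezzi theory for linear saddle-point problems, since all three ingredients have been assembled by the preceding lemmas and theorem. The plan is therefore short. I would check (i) continuity of $a_M(\cdot,\cdot)$, $b_{MZ}(\cdot,\cdot)$, $\mathcal{F}_M$, and $\mathcal{F}_3$ on the relevant spaces, (ii) coercivity of $a_M(\cdot,\cdot)$ on the discrete kernel of $b_{MZ}(\cdot,\cdot)$ in $M^h$, and (iii) the discrete inf-sup condition for $b_{MZ}(\cdot,\cdot)$ between $M^h$ and $Z^h$, and then cite \cite{Brezzi_1990}.

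For (i), continuity of $a_M$ is verified term by term using Cauchy-Schwarz on the volumetric inner products, together with a trace inequality for the interface terms $b_{\gamma,2}(\cdot,\cdot)$, $b_{pp}(\cdot,\cdot)$, $b_2(\cdot,\cdot)$, and for the stabilization term $a_\lambda(\cdot,\cdot)$. Continuity of $b_{MZ}$ follows in the same way, with the duality pairing between $H^{1/2}(\gamma)$ and $H^{-1/2}(\gamma)$ handling the $b_{LM}$ contribution. Continuity of $\mathcal{F}_M$ and $\mathcal{F}_3$ is immediate from the assumed regularity of the data and of the previous-step iterates, as already noted just before Theorem \ref{FPSI_FEM:lem:Binfsup}.

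For (ii), I would observe that $M^h \subset M$ with the inherited norm, so the coercivity estimate
\[ a_M(\bm{m}_h,\bm{m}_h) \geq \alpha_1 ||\bm{m}_h||_M^2 \quad \forall \ \bm{m}_h \in M^h \]
established in Lemma \ref{FPSI_FEM:lem:AM_ell} is automatically inherited, with the same constant $\alpha_1$. In particular it holds on the discrete kernel of $b_{MZ}(\cdot,\cdot)$, which is the only kernel coercivity Brezzi's theorem requires. For (iii), the discrete inf-sup constant $\beta_2 > 0$ has just been produced in Theorem \ref{FPSI_FEM:thm:DiscInfSup}.

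With these three ingredients in hand, the Brezzi theorem \cite{Brezzi_1990} yields existence and uniqueness of $(\bm{m}_h^{n+1},\bm{z}_h^{n+1}) \in M^h \times Z^h$ solving \eqref{FPSI_FEM:discreteSaddlePoint}. There is no real obstacle in this step: the conceptual work was done in Theorem \ref{FPSI_FEM:thm:DiscInfSup}, where the discrete inf-sup was obtained via the Fortin-style interpolants $\mathcal{I}_f^h$ and $\mathcal{I}_s^h$ together with the Stokes inf-sup condition \eqref{FPSI_FEM:eq:InfSup_Uh_Qh}, and in Lemma \ref{FPSI_FEM:lem:AM_ell}, where the stabilization term $a_\lambda(\cdot,\cdot)$ supplied the missing coercivity in the $\lambda_p$ direction. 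Consequently this theorem is properly viewed as a short corollary of those two results.
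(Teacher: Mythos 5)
Your argument is correct and matches the paper's own treatment essentially verbatim: the paper likewise derives Theorem \ref{FPSI_FEM:fully-discrete-well-posedness} as an immediate corollary of the discrete inf-sup condition in Theorem \ref{FPSI_FEM:thm:DiscInfSup} and the coercivity of $a_M(\cdot,\cdot)$ inherited from Lemma \ref{FPSI_FEM:lem:AM_ell} on the conforming subspace $M^h \subset M$, and then invokes \cite{Brezzi_1990}. The only addition you make is a brief sketch of the continuity checks, which the paper treats as routine.
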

\noindent Next, we examine the stability and convergence properties of the formulation.

\section{Stability Analysis}\label{FPSI_FEM:sec:stability}
We consider stability of the fully discrete formulation; however, stability of the semi-discrete problem \eqref{FPSI_FEM:WF:allTimeDisc} could be similarly demonstrated as the techniques and inequalities used are not dependent upon properties of the discrete spaces. We assume homogeneous Neumann conditions $\bm{u}_N = 0, \bm{\eta}_N = 0, p_N = 0$ to simplify the problem setting, and the extension to the non-homogeneous case could be easily handled.  We also define an energy norm for the displacement
$$ ||\bm{\eta}^{n+1}||_E^2 := 2\nu_p ||D(\bm{\eta}^{n+1})||^2_{0,\Omega_p} + \lambda ||\nabla \cdot \bm{\eta}^{n+1}||_{0,\Omega_p}^2,$$
and list the following inequalities for all $\bm{u} \in U, \bm{\eta} \in X$, and all $p_p \in Q_p$ (see (\cite{Bukac_2015OpSplit}), for example). The constants $C_T, C_K, C_P$ depend only on the domains. Instead of defining individual constants for each inequality, we may find a single constant for each type of inequality (trace, Korn's, and Poincar\'{e}) to simplify notation.
\begin{align}\label{FPSI_FEM:eq:TraceIneqs} 
\begin{split}
&\text{Trace Inequ. } \quad  ||\bm{u}||_{0,\gamma} \leq C_T ||\bm{u}||_{1,\Omega_f}, \quad ||\bm{\eta}||_{0,\gamma} \leq C_T ||\bm{\eta}||_{1,\Omega_p}, \quad ||p_p||_{0,\gamma} \leq C_T ||p_p||_{1,\Omega_p} \\
&\text{Korn's Inequ. } \quad ||\nabla \bm{u}||_{0,\Omega_f} \leq C_K ||D(\bm{u})||_{0,\Omega_f}, \quad ||\nabla \bm{\eta}||_{0,\Omega_p} \leq C_K ||D(\bm{\eta})||_{0,\Omega_p}. 
\end{split}
\end{align}
As functions in $U, X, Q_p$ satisfy homogeneous Dirichlet conditions on a component of the boundary, the Poincar\'{e} inequalities hold:
\begin{align}\label{FPSI_FEM:eq:PoincareIneqs}
\begin{split}
||\bm{u}||_{0,\Omega_f} \leq C_P ||\nabla \bm{u}||_{0,\Omega_f}, \quad ||\bm{\eta}||_{0,\Omega_p} \leq C_P ||\nabla \bm{\eta}||_{0,\Omega_p}, \quad ||p_p||_{0,\Omega_p} \leq C_P ||\nabla p_p||_{0,\Omega_p}.
\end{split}
\end{align}
Combining Korn's and Poincar\'{e} inequalities, we see that there exists a constant $C_{KP}$ satisfying
\begin{equation}\label{FPSI_FEM:eq:1normbound}
||\bm{u}||_{1,\Omega_f}^2 \leq C_{KP} ||D(\bm{u})||^2_{0,\Omega_f}, \quad ||\bm{\eta}||_{1,\Omega_p}^2 \leq  C_{KP} ||D(\bm{\eta})||_{0,\Omega_p}^2, \quad ||p_p||_{1,\Omega_p}^2 \leq C_{KP} ||\nabla p_p||_{0,\Omega_p}^2.
\end{equation}
We list the remaining constants that will be utilized throughout the proof: 
\begin{align}\label{FPSI_FEM:eq:StabConstants}
    \begin{split}
    C_\eta &:= \frac{C_{KP}}{2\nu_p}, \quad C_1 := C_T \sqrt{C_{KP}}, \quad 
          C_2 := \sqrt{d} C_K C_P.
    \end{split}
\end{align}
The weak form \eqref{FPSI_FEM:WF:allTimeDisc}
is restated over the discrete spaces, moving inner products with previous time step terms to the left hand side and scaling by $\Delta t$ to obtain the finite difference approximations for the derivative terms defined in \eqref{FPSI_FEM:eq:TimeDerivNotation}. We also add the stabilization term  $\bar{\epsilon} \hspace{0.2mm} a_\lambda(\cdot,\cdot)$ for consistency with the SP formulation used for the well-posedness results. Thus for all $\bm{v}_h \in U^{h_1}, \bm{\varphi}_h \in X^{h_2}$, $w_h \in Q^{h_2}_p, s_{w,h} \in \Lambda_{g2}^{h_\gamma}$, $\mu_h \in \Lambda_\lambda^{h_\gamma}, q_h \in Q_f^{h_1}$, and $s_{1,h} \in \Lambda_{g1}^{h_\gamma}$,
\begin{align}\label{FPSI_FEM:WF:allTimeDisc_StabError_spaceDisc}
    \begin{split}
       &\rho_f ( \bm{\dot{u}}_h^{n+1},  \bm{v}_h )_{\Omega_f} + 2  \nu_f \left( D( \bm{u}_h^{n+1}), D( \bm{v}_h) \right)_{\Omega_f} -  (p_{f,h}^{n+1}, \nabla \cdot  \bm{v}_h)_{\Omega_f} -  \langle g_{1,h}^{n+1} \bm{n}_f, \bm{v}_h\rangle_\gamma \\
      &\hspace{25mm}  -  ( g_{2,h}^{n+1} \bm{\tau}_\gamma, \bm{v}_h)_\gamma =  \langle\bm{f}_f^{n+1}, \bm{v}_h\rangle_{\Omega_f} \\
&\frac{\rho_p}{\Delta t} \left( \bm{\dot{\eta}}_h^{n+1} - \bm{\dot{\eta}}_h^n, \bm{\varphi}_h \right)_{\Omega_p} +  2  \nu_p \left( D\left( \bm{\eta}_h^{n+1}\right), D(\bm{\varphi}_h) \right)_{\Omega_p} + \lambda  \left( \nabla \cdot \bm{\eta}_h^{n+1}, \nabla \cdot \bm{\varphi}_h \right)_{\Omega_p} \\
& \hspace{25mm}- \alpha (p_{p,h}^{n+1}, \nabla \cdot\bm{\varphi}_h)_{\Omega_p}- \langle g_{1,h}^{n+1}\bm{n}_p,\bm{\varphi}_h\rangle_\gamma +  ( g_{2,h}^{n+1}\bm{\tau}_\gamma, \bm{\varphi}_h)_\gamma  =  \langle\bm{f}_\eta^{n+1}, \bm{\varphi}_h \rangle_{\Omega_p}   \\
&s_0 ( \dot{p}_{p,h}^{n+1}, w_h)_{\Omega_p} + \alpha\left(\nabla \cdot \bm{\dot{\eta}}_h^{n+1}, w_h \right)_{\Omega_p} + \kappa  (\nabla p_{p,h}^{n+1}, \nabla w_h)_{\Omega_p} -  ( \lambda_{p,h}^{n+1}, w_h)_{\gamma} =  \langle f_p^{n+1},w_h \rangle_{\Omega_p}  \\
  &  \frac{1}{\beta } ( g_{2,h}^{n+1},s_{2,h})_\gamma  +  ( \bm{u}_h^{n+1} \cdot \bm{\tau}_\gamma,s_{2,h})_\gamma - \left(\bm{\dot{\eta}}_h^{n+1} \cdot \bm{\tau}_\gamma,s_{2,h}\right)_\gamma = 0\\
     &  \langle g_{1,h}^{n+1},\mu_h \rangle_\gamma  + (  p_{p,h}^{n+1},\mu_h)_\gamma + \overline{\epsilon} (\lambda_{p,h}^{n+1},\mu_h)_{1/2,\gamma} = 0 \\
     &( \nabla \cdot  \bm{u}_h^{n+1}, q_h )_{\Omega_f} = 0  \\
  & \langle  \bm{u}_h^{n+1} \cdot \bm{n}_f,s_{1,h}\rangle_\gamma + \left\langle \bm{\dot{\eta}}_h^{n+1} \cdot \bm{n}_p ,s_{1,h} \right\rangle_\gamma   -  \langle \lambda_{p,h}^{n+1},s_{1,h}\rangle_\gamma = 0.
    \end{split}
\end{align}

\noindent The stability of this system is stated in the following theorem.
\begin{theorem}\label{FPSI_FEM:thm:StabilitySB}
Assume that $\bm{f}_f \in L^2(0,T;\bm{H^{-1}}(\Omega_f))$, $\bm{f}_\eta \in H^1(0,T;\bm{H^{-1}}(\Omega_p))$, and $f_p \in L^2(0,T;H^{-1}(\Omega_p))$. Let $\Big\{\big((\bm{u}^n_h,\bm{\eta}^n_h,p^n_{p,h},g^n_{2,h},\lambda^n_{p,h}),$ $(p^n_{f,h},g^n_{1,h})\big)\Big\}_{0\leq n \leq N} \in M^h \times Z^h$ be the solution of \eqref{FPSI_FEM:WF:allTimeDisc_StabError_spaceDisc}, with $N$ the total number of time steps; i.e. $T = N \Delta t.$ Then the following estimate holds for $\overline{C}, C^*, \varepsilon_1$ defined in \eqref{FPSI_FEM:eq:eps1defn},\eqref{FPSI_FEM:eq:Cfinaldefns}: 
\begin{small}
\begin{align*}
    \begin{split}
       & ||\bm{\eta}^N_h ||_E^2 +   ||  \bm{u}_h^{N} ||^2_{0}      +  || \bm{\dot{\eta}}_h^{N}||^2_{0}    +  || p_{p,h}^{N}||^2_{0}  + \Delta t \sum_{n=0}^{N} \Big[ || \bm{\ddot{\eta}}_h^{n}||_0^2 +   ||  \bm{\dot{u}}_h^{n} ||^2_{0}  +  || \bm{\dot}{p}_{p,h}^{n}||^2_{0} +  || \bm{\dot{\eta}}_h^{n} ||_E^2 \\
       &+    || D( \bm{u}_h^{n})||^2_{0}  +   || \nabla p_{p,h}^{n}||^2_{0} + || g_{2,h}^{n}||^2_{0,\gamma}  +  ||\lambda_{p,h}^{n}||^2_{1/2,\gamma}   +   ||p_{f,h}^{n}||_{0}^2 + ||g_{1,h}^{n}||_{-1/2,\gamma}^2 \Big] \\
       &\leq  \frac{C^*}{\overline{C}} \Bigg( \left(1+\frac{1}{N}\right)\Big[ \rho_f  ||  \bm{u}_h^{0} ||^2_{0} +  \rho_p || \bm{\dot{\eta}}_h^{0}||^2_{0} + s_0  || p_{p,h}^{0}||^2_{0} +   (1+2C_\eta) ||\bm{\eta}_h^0||_E^2  +  || \bm{f}_\eta^0||_{-1}^2 +  \frac{1}{\Delta t^2 \varepsilon_1} || \bm{f}_\eta^N||_{-1}^2 \Big]\\
       &+\rho_p \Delta t^2 ||\bm{\ddot{\eta}}_h^0||_0^2 + \rho_f \Delta t^2 ||\bm{\dot{u}}_h^0||_0^2 + s_0 \Delta t^2 ||\bm{\dot}{p}_{p,h}^0||_0^2 + \Delta t^2 ||\bm{\dot{\eta}}_h^0||_E^2 + 2\nu_f\Delta t ||D(\bm{u}_h^0)||_0^2 + \kappa \Delta t||\nabla p_{p,h}^0||_0^2 \\
    & + \frac{2\Delta t}{\beta} ||g_{2,h}^0||^2_{0,\gamma} + 2\bar{\epsilon}\Delta t ||\lambda_{p,h}^0||_{1/2,\gamma}^2 + \varepsilon_1 \Delta t^2 \beta_2^2 (||p_{f,h}^0||_0^2 + ||g_{1,h}^0||_{-1/2,\gamma}^2) +  \left(\Delta t+\frac{\Delta t}{N}\right)\sum_{n=0}^{N}  \Bigg[   \Delta t || \bm{\dot{f}_\eta}^{n} ||_{-1}^2   \\
    &+ 70\varepsilon_1 \Delta t  ||\bm{f}_\eta^{n}||^2_{-1} + \left( \frac{C_{KP}}{2\nu_f} + 70\varepsilon_1 \Delta t \right) ||\bm{f}_f^{n}||_{-1}^2 +  \left( \frac{C_{KP}}{\kappa} + 70\varepsilon_1 \Delta t \right)|| f_p^{n}||^2_{-1} \Bigg] \Bigg).
    \end{split}
\end{align*}
\end{small}
\end{theorem}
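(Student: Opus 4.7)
The strategy is a standard two-level energy argument for second-order-in-time evolution problems, combined with a discrete inf-sup bound to control the pressure-type unknowns. I would carry it out in three stages.

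First, I would test the seven equations of \eqref{FPSI_FEM:WF:allTimeDisc_StabError_spaceDisc} with the current solutions themselves: $\bm v_h = \bm u_h^{n+1}$, $\bm\varphi_h = \dot{\bm\eta}_h^{n+1}$, $w_h = p_{p,h}^{n+1}$, $s_{2,h}=g_{2,h}^{n+1}$, $\mu_h=\lambda_{p,h}^{n+1}$, $q_h=p_{f,h}^{n+1}$, $s_{1,h}=g_{1,h}^{n+1}$. When the seven resulting identities are added, all mixed interface and divergence terms cancel pairwise by design of the coupling. Multiplying through by $\Delta t$ and invoking the standard identity $\Delta t(\dot a^{n+1},a^{n+1}) = \tfrac12(\|a^{n+1}\|^2-\|a^n\|^2+\|a^{n+1}-a^n\|^2)$ (with the analogous kinetic-energy identity applied to $(\ddot{\bm\eta}_h^{n+1},\dot{\bm\eta}_h^{n+1})$) produces telescoping sums giving control of $\rho_f\|\bm u_h^N\|_0^2$, $\rho_p\|\dot{\bm\eta}_h^N\|_0^2$, $s_0\|p_{p,h}^N\|_0^2$, $\|\bm\eta_h^N\|_E^2$, together with the dissipation sums $\Delta t\sum\|D(\bm u_h^n)\|_0^2$, $\Delta t\sum\|\nabla p_{p,h}^n\|_0^2$, $\Delta t\sum\|g_{2,h}^n\|_{0,\gamma}^2$ and, crucially, $\Delta t\sum\|\lambda_{p,h}^n\|_{1/2,\gamma}^2$ --- the last arising entirely from the stabilization $\bar\epsilon(\lambda_{p,h}^{n+1},\mu_h)_{1/2,\gamma}$. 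Forcing terms are absorbed with Cauchy--Schwarz and Young's inequality using the constants $C_{KP}$, $C_T$, $C_\eta$ listed in \eqref{FPSI_FEM:eq:StabConstants}.

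Second, to obtain the time-derivative norms $\|\dot{\bm u}_h^n\|_0$, $\|\ddot{\bm\eta}_h^n\|_0$, $\|\dot p_{p,h}^n\|_0$, $\|\dot{\bm\eta}_h^n\|_E$ appearing in the conclusion, I would subtract the system \eqref{FPSI_FEM:WF:allTimeDisc_StabError_spaceDisc} at $t^n$ from its counterpart at $t^{n+1}$ and then test the differenced system with $\dot{\bm u}_h^{n+1}$, $\ddot{\bm\eta}_h^{n+1}$, $\dot p_{p,h}^{n+1}$, $\dot g_{2,h}^{n+1}$, $\dot\lambda_{p,h}^{n+1}$, $\dot p_{f,h}^{n+1}$, $\dot g_{1,h}^{n+1}$, and repeat the energy computation. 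This produces the higher-order estimates and forces $\dot{\bm f}_\eta^n$ onto the right-hand side, which is precisely why the hypothesis $\bm f_\eta\in H^1(0,T;\bm H^{-1}(\Omega_p))$ is required. The initial-time contributions $\rho_p\Delta t^2\|\ddot{\bm\eta}_h^0\|_0^2$, $\rho_f\Delta t^2\|\dot{\bm u}_h^0\|_0^2$, and so on, appear here because the differenced telescoping starts at $n=1$ rather than $n=0$. Finally, to bound $p_{f,h}^n$ and $g_{1,h}^n$ (for which the diagonal of the system is empty), I would rewrite equations (1), (2), (6), (7) of \eqref{FPSI_FEM:WF:allTimeDisc_StabError_spaceDisc} as $b_{MZ}(\bm\zeta_h,\bm z_h^{n+1}) = \mathcal F_M(\bm\zeta_h) - a_M(\bm m_h^{n+1},\bm\zeta_h)$, take the supremum over $\bm\zeta_h\in M^h$, and invoke Theorem \ref{FPSI_FEM:thm:DiscInfSup}: the factor $\beta_2\|\bm z_h^{n+1}\|_Z$ on the left is then controlled by norms of $\dot{\bm u}_h^{n+1}$, $\ddot{\bm\eta}_h^{n+1}$, energy contributions to $a_M$, and the forcing --- all already estimated. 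Multiplying by $\Delta t^2$, summing over $n$, and absorbing the result into the energy bound with a carefully tuned Young's-inequality parameter $\varepsilon_1$ closes the estimate.

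The principal obstacle is the calibration in this third step. The inf-sup control of $\bm z_h^{n+1}$ comes with a factor $1/\beta_2^2$, and when fed back into the energy estimate via Young's inequality its coefficient must be small enough that it does not reabsorb the coercivity already gained in stages one and two, yet large enough to yield a nontrivial bound on $(p_{f,h},g_{1,h})$. This explains why the conclusion is stated with a specific $\varepsilon_1$ from \eqref{FPSI_FEM:eq:eps1defn} and a composite constant $C^*/\overline C$ rather than generic hidden constants. A secondary subtlety is bookkeeping the interface cancellations: the tangential pairings $(g_{2,h}^{n+1}\bm\tau_\gamma,\bm u_h^{n+1}-\dot{\bm\eta}_h^{n+1})_\gamma$ coming from the Stokes and Biot momentum equations must cancel exactly against the tangential LM equation tested with $g_{2,h}^{n+1}$, and analogous cancellations for the $g_{1,h}$--$\lambda_{p,h}$ block must also hold; otherwise uncontrolled boundary contributions would survive and destroy the energy identity.
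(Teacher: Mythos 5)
Your Step 1 and your Step 3 (the inf-sup bound on $(p_{f,h},g_{1,h})$) follow the paper's Steps 1 and 2 closely, and you correctly identify the tangential and normal interface cancellations and the need to tune the Young parameter $\varepsilon_1$ against the inf-sup constant. However, there is a genuine gap in your account of where the time-derivative norms and the $\dot{\bm f}_\eta$ regularity come from, and the entire ``differenced system'' step you insert is not how the paper proceeds and would not yield the claimed estimate in the claimed form.

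Concretely: the discrete product identity $(a^{n+1}-a^n)a^{n+1}=\tfrac12\big(|a^{n+1}|^2-|a^n|^2+|a^{n+1}-a^n|^2\big)$, applied in Step 1 to all four inertial/elastic pairings, already produces the squared difference terms $\|\bm u_h^{n+1}-\bm u_h^n\|_0^2$, $\|\dot{\bm\eta}_h^{n+1}-\dot{\bm\eta}_h^n\|_0^2$, $\|p_{p,h}^{n+1}-p_{p,h}^n\|_0^2$, and $\|\bm\eta_h^{n+1}-\bm\eta_h^n\|_E^2$. Rewritten as $\Delta t^2\|\dot{\bm u}_h^{n+1}\|_0^2$, $\Delta t^2\|\ddot{\bm\eta}_h^{n+1}\|_0^2$, and so on, these \emph{are} the derivative sums in the theorem; no differenced-in-time system is ever formed. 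Your proposed second level (subtract the system at $t^n$ from that at $t^{n+1}$, then test with difference quotients) would instead control endpoint quantities like $\|\ddot{\bm\eta}_h^N\|_0^2$ and a dissipation sum in $\|D(\dot{\bm u}_h^n)\|_0^2$, which is not what is stated; you would then still have to convert those to the time-averaged norms $\Delta t\sum\|\ddot{\bm\eta}_h^n\|_0^2$ etc., and justify initial data for all differenced quantities. Relatedly, the appearance of $\|\dot{\bm f}_\eta^n\|_{-1}$ is \emph{not} caused by differencing the system: it arises from the discrete integration-by-parts (Abel summation) applied to $\Delta t\sum 2\langle\bm f_\eta^{n+1},\dot{\bm\eta}_h^{n+1}\rangle$, which is performed precisely because directly applying Young's inequality to this pairing would require control of $\|\dot{\bm\eta}_h^{n+1}\|_0$ that is not yet available with a summable coefficient. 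Likewise, the $\Delta t^2$-weighted initial-time contributions $\rho_p\Delta t^2\|\ddot{\bm\eta}_h^0\|_0^2$, etc., arise from the bookkeeping manipulation in which the missing $n=0$ term $\Delta t\,b^0$ is added and bounded using $\bar b^0\leq B_2$, not from telescoping of a differenced system. Finally, you never mention Gronwall's lemma, which is essential here: the integration-by-parts step leaves $\frac{C_\eta}{\Delta t}\|\bm\eta_h^n\|_E^2$ on the right-hand side, so the estimate cannot be closed by absorption alone.
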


\begin{proof}
\noindent We present the proof in three steps.  \\
\textit{\textbf{Step 1: Bound the functions in $M^h$.}}\\

 Define $N$ such that $T = N\Delta t$, and choose the test functions $ \bm{v}_h =  \bm{u}_h^{n+1}$, $ \bm{\varphi}_h = \bm{\dot{\eta}}_h^{n+1}$, $w_h = p_{p,h}^{n+1} $, $s_{2,h} = g_{2,h}^{n+1}, \mu_h = \lambda_{p,h}^{n+1}$, $ q_h = p_{f,h}^{n+1},$ and $s_{1,h} = g_{1,h}^{n+1}$ in \eqref{FPSI_FEM:WF:allTimeDisc_StabError_spaceDisc} to obtain 
\begin{align*}
    \begin{split}
       &\rho_f ( \bm{\dot{u}_h}^{n+1},  \bm{u}_h^{n+1} )_{\Omega_f} + 2  \nu_f \left( D( \bm{u}_h^{n+1}), D( \bm{u}_h^{n+1}) \right)_{\Omega_f} -  (p_{f,h}^{n+1}, \nabla \cdot  \bm{u}_h^{n+1})_{\Omega_f} -  \langle g_{1,h}^{n+1} \bm{n}_f, \bm{u}_h^{n+1}\rangle_\gamma  \\
      &\hspace{15mm} -  ( g_{2,h}^{n+1} \bm{\tau}_\gamma, \bm{u}_h^{n+1})_\gamma=  \langle\bm{f}_f^{n+1}, \bm{u}_h^{n+1}\rangle_{\Omega_f}   \\
&\frac{\rho_p}{\Delta t} \left( \bm{\dot{\eta}}_h^{n+1} - \bm{\dot{\eta}_h}^n, \bm{\dot{\eta}}_h^{n+1} \right)_{\Omega_p} +  2  \nu_p \left( D\left( \bm{\eta}_h^{n+1}\right), D(\bm{\dot{\eta}}_h^{n+1}) \right)_{\Omega_p} + \lambda  \left( \nabla \cdot  \bm{\eta}_h^{n+1}, \nabla \cdot \bm{\dot{\eta}}_h^{n+1} \right)_{\Omega_p} \\
& \hspace{15mm}- \alpha (p_{p,h}^{n+1}, \nabla \cdot\bm{\dot{\eta}}_h^{n+1})_{\Omega_p}- \langle g_{1,h}^{n+1}\bm{n}_p, \bm{\dot{\eta}}_h^{n+1}\rangle_\gamma +  ( g_{2,h}^{n+1}\bm{\tau}_\gamma, \bm{\dot{\eta}}_h^{n+1})_\gamma  =  \langle\bm{f}_\eta^{n+1}, \bm{\dot{\eta}}_h^{n+1}\rangle_{\Omega_p}  \\
&s_0 ( \dot{p}_{p,h}^{n+1}, p_{p,h}^{n+1})_{\Omega_p} + \alpha\left(\nabla \cdot \bm{\dot{\eta}}_h^{n+1}, p_{p,h}^{n+1} \right)_{\Omega_p} + \kappa  (\nabla p_{p,h}^{n+1}, \nabla p_{p,h}^{n+1})_{\Omega_p} -  ( \lambda_{p,h}^{n+1}, p_{p,h}^{n+1})_{\gamma} =  \langle f_p^{n+1},p_{p,h}^{n+1} \rangle_{\Omega_p}   \\
      &  \frac{1}{\beta } ( g_{2,h}^{n+1},g_{2,h}^{n+1})_\gamma  +  ( \bm{u}_h^{n+1} \cdot \bm{\tau}_\gamma,g_{2,h}^{n+1})_\gamma - \left(\bm{\dot{\eta}}_h^{n+1} \cdot \bm{\tau}_\gamma,g_{2,h}^{n+1}\right)_\gamma = 0 \\
     &  \langle g_{1,h}^{n+1},\lambda_{p,h}^{n+1}\rangle_\gamma  + (  p_{p,h}^{n+1},\lambda_{p,h}^{n+1})_\gamma + \overline{\epsilon} (\lambda_{p,h}^{n+1},\lambda_{p,h}^{n+1})_{1/2,\gamma} = 0\\
       &( \nabla \cdot  \bm{u}_h^{n+1}, p_{f,h}^{n+1} )_{\Omega_f} = 0 \\
  & \langle  \bm{u}_h^{n+1} \cdot \bm{n}_f,g_{1,h}^{n+1}\rangle_\gamma + \langle \bm{\dot{\eta}}_h^{n+1} \cdot \bm{n}_p ,g_{1,h}^{n+1} \rangle_\gamma   -  \langle \lambda_{p,h}^{n+1},g_{1,h}^{n+1}\rangle_\gamma = 0 .
    \end{split}
\end{align*}

\noindent Notice that the sum of the last two equations results in the bilinear form $-b_{MZ}(\cdot,\cdot)$, i.e.,
\begin{align}\label{FPSI_FEM:eq:constraintsAdd}
\begin{split}
  -b_{MZ}\left( (\bm{u}_h^{n+1},\bm{\dot{\eta}}_h^{n+1},p_{p,h}^{n+1},g_{2,h}^{n+1},\lambda_{p,h}^{n+1}),(p_{f,h}^{n+1},g_{1,h}^{n+1})\right) = 0.
    \end{split}
\end{align}
Adding the first five equations causes most of the mixed terms to drop:
\begin{small}
\begin{align}
    \begin{split}
       &\rho_f ( \bm{\dot{u}}_h^{n+1},  \bm{u}_h^{n+1} )_{\Omega_f} + 2  \nu_f \left( D( \bm{u}_h^{n+1}), D( \bm{u}_h^{n+1}) \right)_{\Omega_f}   + \frac{\rho_p}{\Delta t} \left( \bm{\dot{\eta}}_h^{n+1}- \bm{\dot{\eta}}_h^n, \bm{\dot{\eta}}_h^{n+1} \right)_{\Omega_p} \\
       & +  2  \nu_p \left( D\left( \bm{\eta}_h^{n+1}\right), D(\bm{\dot{\eta}}_h^{n+1}) \right)_{\Omega_p} 
        +\lambda  \left( \nabla \cdot  \bm{\eta}_h^{n+1}, \nabla \cdot \bm{\dot{\eta}}_h^{n+1} \right)_{\Omega_p}   +s_0 ( \dot{p}_{p,h}^{n+1}, p_{p,h}^{n+1})_{\Omega_p} \\
        &+ \kappa  (\nabla p_{p,h}^{n+1}, \nabla p_{p,h}^{n+1})_{\Omega_p} + \frac{1}{\beta } ( g_{2,h}^{n+1},g_{2,h}^{n+1})_\gamma  
       + \overline{\epsilon} (\lambda_{p,h}^{n+1},\lambda_{p,h}^{n+1})_{1/2,\gamma} -  (p_{f,h}^{n+1}, \nabla \cdot  \bm{u}_h^{n+1})_{\Omega_f} -  \langle g_{1,h}^{n+1} \bm{n}_f, \bm{u}_h^{n+1}\rangle_\gamma \\
       &- \langle g_{1,h}^{n+1}\bm{n}_p, \bm{\dot{\eta}}_h^{n+1}\rangle_\gamma + \langle g_{1,h}^{n+1},\lambda_{p,h}^{n+1}\rangle_\gamma =  \langle\bm{f}_f^{n+1}, \bm{u}_h^{n+1}\rangle_{\Omega_f}  + \langle\bm{f}_\eta^{n+1}, \bm{\dot{\eta}}_h^{n+1}\rangle_{\Omega_p} +  \langle f_p^{n+1},p_{p,h}^{n+1} \rangle_{\Omega_p}.
    \end{split}
\end{align}
\end{small}
\noindent By \eqref{FPSI_FEM:eq:constraintsAdd}, the remaining mixed terms equal zero. We invoke the definition of the derivative difference quotients and apply the identity $(a-b)a = \frac{1}{2} \left( a^2 - b^2 + (a-b)^2\right)$ to the left side of the equation. For the inner products on the right, we apply Young's inequality and \eqref{FPSI_FEM:eq:1normbound}:

\begin{align*}
       &\frac{\rho_f}{2 \Delta t} \Big( ||  \bm{u}_h^{n+1} ||^2_{0} - ||  \bm{u}_h^{n} ||^2_{0}   \Big)   + \frac{\rho_p}{2 \Delta t} \Big( || \bm{\dot{\eta}}_h^{n+1}||^2_{0}  - || \bm{\dot{\eta}}_h^{n}||^2_{0} \Big)   +\frac{s_0}{2 \Delta t} \Big(  || p_{p,h}^{n+1}||^2_{0} -  || p_{p,h}^{n}||^2_{0}  \Big)\\
       &+  \frac{\nu_p}{\Delta t} \Big( ||D\left( \bm{\eta}_h^{n+1} \right)||_0^2 - ||D\left(\bm{\eta}_h^n\right)||_0^2 \Big)  +  \frac{\lambda}{2 \Delta t} \Big( ||\nabla \cdot  \bm{\eta}_h^{n+1} ||_0^2 - ||\nabla \cdot \bm{\eta}_h^n||_0^2 \Big)+ \frac{\rho_f}{2 \Delta t} ||  \bm{u}_h^{n+1}- \bm{u}_h^n ||^2_{0} \\
        & + \frac{\rho_p}{2\Delta t}  || \bm{\dot{\eta}}_h^{n+1} - \bm{\dot{\eta}}_h^{n}||^2_{0}  + \frac{s_0}{2\Delta t} || p_{p,h}^{n+1}- p_{p,h}^{n}||^2_{0} + \frac{\nu_p}{\Delta t} \Big(|| D(\bm{\eta}_h^{n+1} - \bm{\eta}_h^{n}) ||_0^2 \Big) \\
       & + \frac{\lambda}{2 \Delta t} \Big(|| \nabla \cdot (\bm{\eta}_h^{n+1} -   \bm{\eta}_h^{n}) ||_0^2 \Big)   + 2  \nu_f || D( \bm{u}_h^{n+1})||^2_{0} + \kappa  || \nabla p_{p,h}^{n+1}||^2_{0}  + \frac{1}{\beta } || g_{2,h}^{n+1}||^2_{0,\gamma} + \overline{\epsilon} ||\lambda_{p,h}^{n+1}||^2_{1/2,\gamma}  \\
      & \leq  \frac{C_{KP}}{4\nu_f} ||\bm{f}_f^{n+1}||_{-1}^2 + \nu_f || D(\bm{u}_h^{n+1})||^2_{0}   +  \langle \bm{f}_\eta^{n+1},\bm{\dot{\eta}}_h^{n+1}\rangle_{\Omega_p}   +  \frac{C_{KP}}{2 \kappa}|| f_p^{n+1}||^2_{-1} + \frac{\kappa}{2} ||\nabla p_{p,h}^{n+1}||^2_{0}.
\end{align*}

\noindent We rewrite some norms in terms of $\bm{\dot{\eta}}_h, \bm{\dot{u}}_h,$ and $\bm{\dot}{p}_{p,h}$ instead of their expanded finite difference forms, i.e., $||\bm{\eta}_h^{n+1} - \bm{\eta}_h^n||^2 = \Delta t^2 ||\bm{\dot{\eta}}_h^{n+1}||^2$. Moving terms to the left gives
\begin{align*}
    \begin{split}
       &\frac{\rho_f}{2 \Delta t} \Big( ||  \bm{u}_h^{n+1} ||^2_{0} - ||  \bm{u}_h^{n} ||^2_{0}   \Big)   + \frac{\rho_p}{2 \Delta t} \Big( || \bm{\dot{\eta}}_h^{n+1}||^2_{0}  - || \bm{\dot{\eta}}_h^{n}||^2_{0} \Big)   +\frac{s_0}{2 \Delta t} \Big(  || p_{p,h}^{n+1}||^2_{0} -  || p_{p,h}^{n}||^2_{0}  \Big) \\
       & +    \frac{1}{2\Delta t} \Big( || \bm{\eta}_h^{n+1} ||_E^2 - ||\bm{\eta}_h^n||_E^2 \Big) + \frac{\rho_f \Delta t}{2} ||  \bm{\dot{u}}_h^{n+1} ||^2_{0}+ \frac{\rho_p\Delta t}{2}  || \bm{\ddot{\eta}}_h^{n+1}||^2_{0}  + \frac{s_0 \Delta t}{2} || \bm{\dot}{p}_{p,h}^{n+1}||^2_{0} \\
       &+\frac{\Delta t}{2} || \bm{\dot{\eta}}_h^{n+1} ||_E^2  +   \nu_f || D( \bm{u}_h^{n+1})||^2_{0} + \frac{\kappa}{2}  || \nabla p_{p,h}^{n+1}||^2_{0}  + \frac{1}{\beta } || g_{2,h}^{n+1}||^2_{0,\gamma} + \overline{\epsilon} ||\lambda_{p,h}^{n+1}||^2_{1/2,\gamma} \\
      & \leq  \frac{C_{KP}}{4\nu_f} ||\bm{f}_f^{n+1}||_{-1}^2    +  \frac{C_{KP}}{2 \kappa}|| f_p^{n+1}||^2_{-1} +  \langle \bm{f}_\eta^{n+1},\bm{\dot{\eta}}_h^{n+1}\rangle_{\Omega_p}.
    \end{split}
\end{align*}

\noindent We now multiply by $2\Delta t$ and sum the resulting inequality from $n=0,\ldots,N-1$:
\begin{align}\label{FPSI_FEM:stabboundMterms}
    \begin{split}
       &\rho_f ||  \bm{u}_h^{N} ||^2_{0}      + \rho_p || \bm{\dot{\eta}}_h^{N}||^2_{0}    +s_0  || p_{p,h}^{N}||^2_{0}  + || \bm{\eta}_h^{N} ||_E^2 + \Delta t \sum_{n=0}^{N-1} \Big[  \rho_p \Delta t|| \bm{\ddot{\eta}}_h^{n+1} ||^2_{0} +  \rho_f \Delta t ||  \bm{\dot{u}}_h^{n+1} ||^2_{0}  \\
       &+ s_0 \Delta t || \bm{\dot}{p}_{p,h}^{n+1}||^2_{0} + \Delta t || \bm{\dot{\eta}}_h^{n+1} ||_E^2  +   2 \nu_f || D( \bm{u}_h^{n+1})||^2_{0} + \kappa  || \nabla p_{p,h}^{n+1}||^2_{0}  + \frac{2}{\beta } || g_{2,h}^{n+1}||^2_{0,\gamma} + 2\overline{\epsilon} ||\lambda_{p,h}^{n+1}||^2_{1/2,\gamma} \Big] \\
      & \leq   \rho_f  ||  \bm{u}_h^{0} ||^2_{0} +  \rho_p || \bm{\dot{\eta}}_h^{0}||^2_{0} + s_0  || p_{p,h}^{0}||^2_{0} +   ||\bm{\eta}_h^0||_E^2\\
      &+ \Delta t \sum_{n=0}^{N-1} \Big[ \frac{C_{KP}}{2\nu_f} ||\bm{f}_f^{n+1}||_{-1}^2    +  \frac{C_{KP}}{\kappa}|| f_p^{n+1}||^2_{-1} +  2 \langle \bm{f}_\eta^{n+1},\bm{\dot{\eta}}_h^{n+1}\rangle_{\Omega_p} \Big].
    \end{split}
\end{align}

\noindent To treat the remaining inner product on the right, integrate by parts in time using the following identity for any real numbers $a$ and $b$ (\cite{Bukac_2015OpSplit}):
\begin{equation*}
    \Delta t \sum_{n=0}^{N-1} a^{n+1} \bm{\dot}{b}^{n+1} = a^N b^N - a^0 b^0 - \Delta t \sum_{n=0}^{N-1} \bm{\dot}{a}^{n+1} b^n.
\end{equation*}

\noindent Thus, the regularity assumption $\bm{f}_\eta \in H^1(0,T;H^{-1}(\Omega_p))$ provides the bound
\begin{align}\label{eq:IBP_inTime_Youngs}
\begin{split}
   &\Delta t \sum_{n=0}^{N-1} 2\langle \bm{f}_\eta^{n+1},\bm{\dot{\eta}}^{n+1}_h \rangle_{\Omega_p} = 2\langle \bm{f}_\eta^{N},\bm{\eta}^{N}_h \rangle_{\Omega_p} - 2\langle \bm{f}_\eta^{0},\bm{\eta}^{0}_h \rangle_{\Omega_p} - \Delta t \sum_{n=0}^{N-1} 2\langle \bm{\dot{f}_\eta}^{n+1},\bm{\eta}^{n}_h \rangle_{\Omega_p} \\
   &\leq \frac{1}{\Delta t^2 \varepsilon_1}|| \bm{f}_\eta^N||_{-1}^2 +  \Delta t^2 \varepsilon_1 ||\bm{\eta}^N_h ||_1^2 +  || \bm{f}_\eta^0||_{-1}^2 +  ||\bm{\eta}^0_h||_1^2 + \Delta t \sum_{n=0}^{N-1} \Big[ \Delta t || \bm{\dot{f}_\eta}^{n+1} ||_{-1}^2 + \frac{1}{\Delta t} ||\bm{\eta}^n_h||_1^2  \Big],
   \end{split}
\end{align}
where $\varepsilon_1 > 0$ is the constant from Young's inequality, which will be defined later. Using the identity $||\bm{\eta}||_1^2 \leq C_\eta || \bm{\eta}||_E^2$, \eqref{FPSI_FEM:stabboundMterms} becomes
\begin{align}\label{FPSI_FEM:stabboundMterms2}
    \begin{split}
       &\rho_f ||  \bm{u}_h^{N} ||^2_{0}      + \rho_p || \bm{\dot{\eta}}_h^{N}||^2_{0}    +s_0  || p_{p,h}^{N}||^2_{0}  + || \bm{\eta}_h^{N} ||_E^2 + \Delta t \sum_{n=0}^{N-1} \Big[ \rho_p \Delta t|| \bm{\ddot{\eta}}_h^{n+1} ||^2_{0} + \rho_f \Delta t ||  \bm{\dot{u}}_h^{n+1} ||^2_{0}  \\
       &+ s_0 \Delta t || \bm{\dot}{p}_{p,h}^{n+1}||^2_{0} + \Delta t || \bm{\dot{\eta}}_h^{n+1} ||_E^2  +   2 \nu_f || D( \bm{u}_h^{n+1})||^2_{0} + \kappa  || \nabla p_{p,h}^{n+1}||^2_{0}  + \frac{2}{\beta } || g_{2,h}^{n+1}||^2_{0,\gamma} + 2\overline{\epsilon} ||\lambda_{p,h}^{n+1}||^2_{1/2,\gamma} \Big] \\
      & \leq   \rho_f  ||  \bm{u}_h^{0} ||^2_{0} +  \rho_p || \bm{\dot{\eta}}_h^{0}||^2_{0} + s_0  || p_{p,h}^{0}||^2_{0} +   (1+C_\eta) ||\bm{\eta}_h^0||_E^2 +  || \bm{f}_\eta^0||_{-1}^2 + \frac{1}{\Delta t^2 \varepsilon_1}  || \bm{f}_\eta^N||_{-1}^2 \\
      &+  \Delta t^2 \varepsilon_1 C_\eta||\bm{\eta}^N_h ||_E^2+\Delta t \sum_{n=0}^{N-1} \Big[ \frac{C_{KP}}{2\nu_f} ||\bm{f}_f^{n+1}||_{-1}^2    +  \frac{C_{KP}}{\kappa}|| f_p^{n+1}||^2_{-1}    + \Delta t || \bm{\dot{f}_\eta}^{n+1} ||_{-1}^2 + \frac{C_\eta}{\Delta t} ||\bm{\eta}^n_h||_E^2\Big].
    \end{split}
\end{align}

\noindent \textit{\textbf{Step 2: Bound the elements in $Z^h$.}}\\

Next, we bound the elements $p_f^{n+1}$ and $g_{1,h}^{n+1}$ using the inf-sup condition in Theorem \ref{FPSI_FEM:thm:DiscInfSup}.
First, find an expression for $b_{MZ}(\bm{\zeta}_h,(p_{f,h}^{n+1},g_{1,h}^{n+1}))$ by returning to the weak form \eqref{FPSI_FEM:WF:allTimeDisc_StabError_spaceDisc}, choosing the test functions $q_h = 0$ and $s_{1,h} = 0$  to obtain for all $\bm{\zeta}_h = (\bm{v}_h, \bm{\varphi}_h, w_h, s_{2,h}, \mu_h) \in M^h$:

\begin{align}\label{FPSI_FEM:eq:testfuncZ0}
    \begin{split}
       & b_{MZ}\left(\bm{\zeta}_h,(p_{f,h}^{n+1},g_{1,h}^{n+1}) \right) =-  (p_{f,h}^{n+1}, \nabla \cdot \bm{v}_h)_{\Omega_f} -  \langle g_{1,h}^{n+1} \bm{n}_f,\bm{v}_h\rangle_\gamma - \langle g_{1,h}^{n+1}\bm{n}_p, \bm{\varphi}_h \rangle_\gamma + \langle g_{1,h}^{n+1},\mu_h \rangle_\gamma  \\
      &\hspace{5mm} =  \langle\bm{f}_f^{n+1},\bm{v}_h\rangle_{\Omega_f} - \rho_f ( \bm{\dot{u}}_h^{n+1}, \bm{v}_h )_{\Omega_f} - 2  \nu_f \left( D(\bm{u}_h^{n+1}), D(\bm{v}_h) \right)_{\Omega_f}  +  ( g_{2,h}^{n+1} \bm{\tau}_\gamma,\bm{v}_h)_\gamma \\
&\hspace{5mm}+  \langle\bm{f}_\eta^{n+1}, \bm{\varphi}_h \rangle_{\Omega_p} - \frac{\rho_p}{\Delta t} \left( \bm{\dot{\eta}}_h^{n+1} - \bm{\dot{\eta}}_h^n, \bm{\varphi}_h \right)_{\Omega_p} -  2  \nu_p \left( D\left(\bm{\eta}_h^{n+1}\right), D(\bm{\varphi}_h) \right)_{\Omega_p} \\
& \hspace{5mm}- \lambda  \left( \nabla \cdot  \bm{\eta}^{n+1}_h, \nabla \cdot \bm{\varphi}_h \right)_{\Omega_p}  + \alpha (p_{p,h}^{n+1}, \nabla \cdot \bm{\varphi}_h)_{\Omega_p} -  ( g_{2,h}^{n+1}\bm{\tau}_\gamma, \bm{\varphi}_h)_\gamma   + \langle f_p^{n+1},w \rangle_{\Omega_p} 
\\
&\hspace{5mm}- s_0 ( \bm{\dot}{p}_{p,h}^{n+1}, w_h)_{\Omega_p} - \alpha\left(\nabla \cdot \bm{\dot{\eta}}_h^{n+1},w_h \right)_{\Omega_p} - \kappa  (\nabla p_{p,h}^{n+1}, \nabla w_h)_{\Omega_p} +  ( \lambda_{p,h}^{n+1}, w)_{\gamma}  - (  p_{p,h}^{n+1},\mu_h)_\gamma \\
      & \hspace{5mm}  - \overline{\epsilon} (\lambda_{p,h}^{n+1},\mu_h)_{1/2,\gamma}  -  \frac{1}{\beta } ( g_{2,h}^{n+1},s_{2,h})_\gamma  -  (\bm{u}_h^{n+1} \cdot \bm{\tau}_\gamma,s_{2,h})_\gamma + \left(\bm{\dot{\eta}}_h^{n+1} \cdot \bm{\tau}_\gamma,s_{2,h}\right)_\gamma .
    \end{split}
\end{align}

\noindent After using the Cauchy-Schwarz inequality and the trace inequalities \eqref{FPSI_FEM:eq:TraceIneqs}, we factor the test functions out to obtain the upper bound
\begin{align*}
b_{MZ}&\left(\bm{\zeta}_h,(p_{f,h}^{n+1},g_{1,h}^{n+1}) \right) 
 \leq \Big( ||\bm{v}_h||_{1} + ||\bm{\varphi}||_{1} + ||w_h||_{1} + ||\mu_h||_{1/2,\gamma}+ ||s_{2,h}||_{0,\gamma} \Big) \\
 &\Bigg[ ||\bm{f}_f^{n+1}||_{-1} + \rho_f ||\bm{\dot{u}}_h^{n+1}||_{0}  + 2\nu_f ||D(\bm{u}_h^{n+1})||_{0} +   C_T||g_{2,h}^{n+1}||_{0,\gamma}    +  ||\bm{f}_\eta^{n+1}||_{-1}  \\
 & + \frac{\rho_p}{\Delta t} ||\bm{\dot{\eta}}_h^{n+1} - \bm{\dot{\eta}}_h^n||_{0}+ 2\nu_p ||D(\bm{\eta}_h^{n+1})||_{0} +\lambda \sqrt{d}C_K  ||\nabla \cdot \bm{\eta}_h^{n+1} ||_{0} + \alpha \sqrt{d}C_K ||p_{p,h}^{n+1}||_{0} \\
 & + C_T||g_{2,h}^{n+1}||_{0,\gamma}   +  ||f_p^{n+1}||_{-1} + s_0 ||\bm{\dot}{p}_{p,h}^{n+1}||_{0} + \alpha ||\nabla \cdot \bm{\dot{\eta}}^{n+1}||_{0} + \kappa ||\nabla p_{p,h}^{n+1}||_{0} \\
 &+ C_T||\lambda_{p,h}^{n+1}||_{0,\gamma}  + ||p_{p,h}^{n+1}||_{0,\gamma} + \overline{\epsilon} ||\lambda_{p,h}^{n+1}||_{1/2,\gamma} + \frac{1}{\beta} ||g_{2,h}^{n+1}||_{0,\gamma} + ||\bm{u}_h^{n+1}||_{0,\gamma} + ||\bm{\dot{\eta}}_h^{n+1}||_{0,\gamma} \Bigg] \\
    &\leq \sqrt{5} \Big( ||\bm{v}_h||^2_{1} + ||\bm{\varphi}||^2_{1} + ||w_h||^2_{1} + ||\mu_h||^2_{1/2,\gamma}+ ||s_{2,h}||^2_{0,\gamma} \Big)^{1/2} \mathcal{J}_1 ,
\end{align*}
where $\mathcal{J}_1$ denotes the second factor in the inequality in brackets. Plugging this result into the inf-sup condition in Theorem \ref{FPSI_FEM:thm:DiscInfSup} gives
\begin{align}\label{FPSI_FEM:J2bound}
     &\beta_2 || (p_{f,h}^{n+1},g_{1,h}^{n+1})||_Z \leq  \underset{0 \neq \bm{\zeta}_h \in M^h}{\sup} \frac{b_{MZ}\left(\bm{\zeta}_h,(p_{f,h}^{n+1},g_{1,h}^{n+1}) \right) }{||\bm{\zeta}_h||_{M}} \leq  \underset{0 \neq \bm{\zeta}_h \in M^h}{\sup} \frac{\sqrt{5}||\bm{\zeta}_h||_M \mathcal{J}_1}{||\bm{\zeta}_h||_{M}} \leq \sqrt{5} \ \mathcal{J}_1.
\end{align}

\noindent Squaring both sides of \eqref{FPSI_FEM:J2bound}, apply trace and Poincar\'{e} inequalities \eqref{FPSI_FEM:eq:TraceIneqs}-\eqref{FPSI_FEM:eq:PoincareIneqs} again with $C_1, C_2$ defined in \eqref{FPSI_FEM:eq:StabConstants}. Multiply the resulting inequality by $\varepsilon_1 \Delta t^2$, with $\varepsilon_1>0$ the constant from Young's inequality in \eqref{eq:IBP_inTime_Youngs}. Lastly, sum from $n=0$ to $n=N-1$, simplifying to obtain:
\begin{align}\label{FPSI_FEM:eq:stabboundpg1}
\begin{split}
    \varepsilon_1 &\Delta t^2 \beta_2^2 \sum_{n=0}^{N-1}\left(||p_{f,h}^{n+1}||_{0}^2 + ||g_{1,h}^{n+1}||_{-1/2,\gamma}^2 \right) \leq  70\varepsilon_1 \Delta t^2 \sum_{n=0}^{N-1} \Big[ ||\bm{f}_f^{n+1}||^2_{-1} +   ||\bm{f}_\eta^{n+1}||^2_{-1} +  ||f_p^{n+1}||^2_{-1} \\
    &+ \rho_f^2 ||\bm{\dot{u}}_h^{n+1}||^2_{0} + (2\nu_f + C_1)^2 ||D(\bm{u}_h^{n+1})||^2_{0}  +  K_2||\bm{\dot{\eta}}_h^{n+1}||_E^2  + \frac{\rho_p^2}{\Delta t^2} ||\bm{\dot{\eta}}_h^{n+1} - \bm{\dot{\eta}}_h^n||^2_{0}  +K_1 ||\bm{\eta}_h^{n+1}||^2_{E}  \\
    &+ s_0^2 ||\bm{\dot}{p}_{p,h}^{n+1}||^2_{0}  + (\kappa + C_1 + \alpha C_2)^2 ||\nabla p_{p,h}^{n+1}||^2_{0}  + (C_T + \overline{\epsilon})^2 ||\lambda_{p,h}^{n+1}||^2_{1/2,\gamma} + \left(2C_T + \frac{1}{\beta}\right)^2 ||g_{2,h}^{n+1}||_{0,\gamma}^2 \Big],
    \end{split}
\end{align}
where $K_1 := \max\{2\nu_p, d \lambda C_K^2 \}$ and $K_2 := \max\left\{ \dfrac{C_1^2}{2\nu_p},\dfrac{\alpha^2}{\lambda}\right\}$.

\noindent \textit{\textbf{Step 3: Combine bounds and apply Gronwall's lemma.}}\\

Next, sum the inequalities \eqref{FPSI_FEM:stabboundMterms2} and \eqref{FPSI_FEM:eq:stabboundpg1}, absorbing appropriate terms from \eqref{FPSI_FEM:eq:stabboundpg1} on the left hand side of the inequality:
\begin{align}\label{FPSI_FEM:eq:AlmostToGronwall}
    \begin{split}
       &\rho_f ||  \bm{u}_h^{N} ||^2_{0}      + \rho_p || \bm{\dot{\eta}}_h^{N}||^2_{0}    +s_0  || p_{p,h}^{N}||^2_{0}  + || \bm{\eta}_h^{N} ||_E^2 + \Delta t \sum_{n=0}^{N-1} \Big[ \rho_p \Delta t (1-70\varepsilon_1 \rho_p) || \bm{\ddot{\eta}}_h^{n+1} ||^2_{0} \\
       &\hspace{5mm}+  \rho_f \Delta t(1 - 70\varepsilon_1 \rho_f) ||  \bm{\dot{u}}_h^{n+1} ||^2_{0}  + s_0 \Delta t(1 - 70\varepsilon_1 s_0) || \bm{\dot}{p}_{p,h}^{n+1}||^2_{0} + \Delta t(1-70\varepsilon_1 K_2) || \bm{\dot{\eta}}_h^{n+1} ||_E^2  \\
       &\hspace{5mm}+   (2 \nu_f-70\varepsilon_1 \Delta t (2\nu_f + C_1)^2) || D( \bm{u}_h^{n+1})||^2_{0} + (\kappa-70\varepsilon_1 \Delta t (\kappa + C_1 + \alpha C_2)^2)  || \nabla p_{p,h}^{n+1}||^2_{0}  \\
       &\hspace{5mm}+ \left( \frac{2}{\beta } - 70\varepsilon_1 \Delta t(2C_T+\frac{1}{\beta})^2 \right) || g_{2,h}^{n+1}||^2_{0,\gamma} + (2\overline{\epsilon}-70\varepsilon_1 \Delta t (C_T+\bar{\epsilon})^2) ||\lambda_{p,h}^{n+1}||^2_{1/2,\gamma} \\
       &\hspace{5mm}+    \varepsilon_1 \Delta t \beta_2^2 \left(||p_{f,h}^{n+1}||_{0}^2 + ||g_{1,h}^{n+1}||_{-1/2,\gamma}^2 \right) \Big] \\
      & \leq   \rho_f  ||  \bm{u}_h^{0} ||^2_{0} +  \rho_p || \bm{\dot{\eta}}_h^{0}||^2_{0} + s_0  || p_{p,h}^{0}||^2_{0} +   (1+C_\eta) ||\bm{\eta}_h^0||_E^2 +  || \bm{f}_\eta^0||_{-1}^2 +  \frac{1}{\Delta t^2 \varepsilon_1} || \bm{f}_\eta^N||_{-1}^2 \\
      & \hspace{5mm} +  \Delta t^2 \varepsilon_1 C_\eta ||\bm{\eta}^N_h ||_E^2 + \Delta t \sum_{n=0}^{N-1} \Big[\left( \frac{C_{KP}}{2\nu_f} + 70\varepsilon_1 \Delta t \right) ||\bm{f}_f^{n+1}||_{-1}^2    +  \left( \frac{C_{KP}}{\kappa} + 70\varepsilon_1 \Delta t \right)|| f_p^{n+1}||^2_{-1}  \\
      &  + \Delta t || \bm{\dot{f}_\eta}^{n+1} ||_{-1}^2  \hspace{5mm} + 70\varepsilon_1 \Delta t  ||\bm{f}_\eta^{n+1}||^2_{-1} + \frac{C_\eta}{\Delta t} ||\bm{\eta}^n_h||_E^2+ 70\varepsilon_1 \Delta t K_1 ||\bm{\eta}_h^{n+1}||^2_{E} \Big].
    \end{split}
\end{align}

For remaining terms involving $||\bm{\eta}_h||_E^2$ on the right hand side, we apply the discrete Gronwall's lemma as stated in (\cite{Layton_2008}), making some adjustments so the inequality is in the correct form. Defining the constant $d^n:=\left( \frac{C_\eta}{\Delta t} + 70\varepsilon_1 \Delta t K_1\right) $, the last two terms of the sum on the right hand side of \eqref{FPSI_FEM:eq:AlmostToGronwall} can be expressed as 
\begin{align*}
    &\Delta t \sum_{n=0}^{N-1} \Big[\frac{C_\eta}{\Delta t} ||\bm{\eta}^n_h||_E^2+ 70\varepsilon_1 \Delta t K_1 ||\bm{\eta}_h^{n+1}||^2_{E}\Big]= C_\eta ||\bm{\eta}_h^0||_E^2 + \Delta t^2 70 \varepsilon_1 K_1 ||\bm{\eta}_h^N||_E^2 + \Delta t \sum_{n=0}^{N-2} d^n ||\bm{\eta}_h^{n+1}||^2_{E}.
\end{align*}

\noindent Let the constants $B_0, B_1, B_2 \geq 0$ and non-negative sequences $\bar{b}^{n},b^n,$ and $ c^{n}$ for $n=0,\ldots,N$ be defined as
\begin{align*}
    B_0&:= \rho_f ||  \bm{u}_h^{N} ||^2_{0}      + \rho_p || \bm{\dot{\eta}}_h^{N}||^2_{0}    +s_0  || p_{p,h}^{N}||^2_{0}\\
   B_1&:= \rho_f  ||  \bm{u}_h^{0} ||^2_{0} +  \rho_p || \bm{\dot{\eta}}_h^{0}||^2_{0} + s_0  || p_{p,h}^{0}||^2_{0} +   (1+2C_\eta) ||\bm{\eta}_h^0||_E^2  +  || \bm{f}_\eta^0||_{-1}^2 +  \frac{1}{\Delta t^2 \varepsilon_1} || \bm{f}_\eta^N||_{-1}^2,\\
    B_2 &:=\rho_p \Delta t ||\bm{\ddot{\eta}}_h^0||_0^2 + \rho_f \Delta t ||\bm{\dot{u}}_h^0||_0^2 + s_0 \Delta t ||\bm{\dot}{p}_{p,h}^0||_0^2 + \Delta t ||\bm{\dot{\eta}}_h^0||_E^2 + 2\nu_f ||D(\bm{u}_h^0)||_0^2 \\
    &+ \kappa ||\nabla p_{p,h}^0||_0^2 + \frac{2}{\beta} ||g_{2,h}^0||^2_{0,\gamma} + 2\bar{\epsilon} ||\lambda_{p,h}^0||_{1/2,\gamma}^2 + \varepsilon_1 \Delta t \beta_2^2 (||p_{f,h}^0||_0^2 + ||g_{1,h}^0||_{-1/2,\gamma}^2)\\ 
   \overline{b}^{n} &:=  \Big[ \rho_p \Delta t (1-70\varepsilon_1 \rho_p)  || \bm{\ddot{\eta}}_h^{n}||_0^2 +  \rho_f \Delta t(1 - 70\varepsilon_1 \rho_f) ||  \bm{\dot{u}}_h^{n} ||^2_{0}  + s_0 \Delta t(1 - 70\varepsilon_1 s_0) || \bm{\dot}{p}_{p,h}^{n}||^2_{0} \\
       &+ \Delta t(1-70\varepsilon_1 K_2) || \bm{\dot{\eta}}_h^{n} ||_E^2 +   (2 \nu_f-70\varepsilon_1 \Delta t (2\nu_f + C_1)^2) || D( \bm{u}_h^{n})||^2_{0}  \\
       &+ (\kappa-70\varepsilon_1 \Delta t (\kappa + C_1 + \alpha C_2)^2)  || \nabla p_{p,h}^{n}||^2_{0} + \left( \frac{2}{\beta } - 70\varepsilon_1 \Delta t(2C_T+\frac{1}{\beta})^2 \right) || g_{2,h}^{n}||^2_{0,\gamma}  \\
       &+ (2\overline{\epsilon}-70\varepsilon_1 \Delta t (C_T+\bar{\epsilon})^2) ||\lambda_{p,h}^{n}||^2_{1/2,\gamma} +    \varepsilon_1 \Delta t \beta_2^2 \left(||p_{f,h}^{n}||_{0}^2 + ||g_{1,h}^{n}||_{-1/2,\gamma}^2 \right) \Big] \\
   c^{n} &:=  \left( \frac{C_{KP}}{2\nu_f} + 70\varepsilon_1 \Delta t \right) ||\bm{f}_f^{n}||_{-1}^2    +  \left( \frac{C_{KP}}{\kappa} + 70\varepsilon_1 \Delta t \right)|| f_p^{n}||^2_{-1}    + \Delta t || \bm{\dot{f}_\eta}^{n} ||_{-1}^2 + 70\varepsilon_1 \Delta t  ||\bm{f}_\eta^{n}||^2_{-1}\\
    b^n &:= \frac{1}{N \Delta t} B_0 + \bar{b}^n.
\end{align*}
Thus, \eqref{FPSI_FEM:eq:AlmostToGronwall} is equivalent to the inequality
\begin{align}\label{FPSI_FEM:eq:ApplyGronwallToThis}
    \begin{split}
       & (1-  \Delta t^2( \varepsilon_1 C_\eta +70 \varepsilon_1 K_1))||\bm{\eta}^N_h ||_E^2 + \Delta t \sum_{n=1}^{N} b^{n} \leq   \Delta t \sum_{n=0}^{N-1} \Big[d^{n}  ||\bm{\eta}_h^{n}||^2_{E}\Big] +\Delta t \sum_{n=0}^{N} c^{n}  +B_1,
    \end{split}
\end{align}
where we have added the terms $\Delta t d^0 ||\bm{\eta}^0_h||_E^2 + \Delta t c_0$ to the right hand side as an upper bound.
As we also need a bound for the initial term $\Delta t b^0$, note
that  \eqref{FPSI_FEM:eq:ApplyGronwallToThis} implies that $B_0$ is less than the right hand side and $\bar{b}^0 \leq B_2$,
\begin{align*} 
\Delta t b^0 &= \frac{1}{N} B_0 +\bar{b}^0 \leq \frac{1}{N} \Bigg(\Delta t \sum_{n=0}^{N-1} \Big[d^{n}  ||\bm{\eta}_h^{n}||^2_{E}\Big] +\Delta t \sum_{n=0}^{N} c^{n}  +B_1\Bigg) + \Delta t B_2 .
\end{align*} 
Let $K_3 := 1+\frac{1}{N}$. Thus, adding the term $\Delta t b_0$
to the left of \eqref{FPSI_FEM:eq:ApplyGronwallToThis} yields
\begin{align}\label{FPSI_FEM:eq:ApplyGronwallToThisFinal}
    \begin{split}
        &(1-  \Delta t^2( \varepsilon_1 C_\eta +70 \varepsilon_1 K_1))||\bm{\eta}^N_h ||_E^2 + \Delta t \sum_{n=0}^{N} b^{n}\\
        &\leq  K_3 \Delta t \sum_{n=0}^{N-1} \Big[d^{n}  ||\bm{\eta}_h^{n}||^2_{E}\Big] +K_3\Delta t \sum_{n=0}^{N} c^{n}  + K_3B_1 + \Delta t B_2.
    \end{split}
\end{align}

\noindent To ensure positivity of the terms on the left hand side of \eqref{FPSI_FEM:eq:ApplyGronwallToThisFinal}, we pick $\varepsilon_1>0$ as 
\begin{align}\label{FPSI_FEM:eq:eps1defn}
\begin{split}
    \varepsilon_1 := \frac{1}{2}\min\Big\{& \frac{1}{70\rho_p}, \frac{1}{70\rho_f},\frac{1}{70s_0}, \frac{1}{70 K_2}, \frac{2\nu_f}{70\Delta t (2\nu_f+C_1)^2}, \frac{\kappa}{70\Delta t (\kappa + C_1 + \alpha C_2)^2},\\
    &\frac{2}{70\beta \Delta t (2C_T + 1/\beta)^2}, \frac{2\bar{\epsilon}}{70\Delta t(C_T + \bar{\epsilon})^2}, \frac{1}{\Delta t^2 (C_\eta + 70K_1)} \Big\}.
    \end{split}
\end{align}


\noindent Define $\varepsilon_2 := (1-  \Delta t^2(\varepsilon_1 C_\eta +70 \varepsilon_1 K_1))$, which is positive by the definition of $\varepsilon_1$. Identifying the term $a^{n+1} := ||\bm{\eta}_h^{n+1}||_E^2$, Gronwall's lemma (\cite{Layton_2008}) applied to \eqref{FPSI_FEM:eq:ApplyGronwallToThisFinal} results in
\begin{align}\label{FPSI_FEM:eq:FinalPostGronwall}
    \begin{split}
       &\varepsilon_2 ||\bm{\eta}^N_h ||_E^2 + \Delta t \sum_{n=0}^{N} b^{n} \leq  \text{exp}\Bigg( \frac{N\Delta t}{\varepsilon_2}K_3d^{n}  \Bigg) \Bigg(K_3\Delta t \sum_{n=0}^{N}  c^{n}  +K_3B_1 + \Delta t B_2\Bigg).
    \end{split}
\end{align}

\noindent To simplify the constants multiplying the norms on the left hand side and the exponential term on the right hand side, define $\overline{C}$ and $C^*$ as follows, and the desired estimate is shown:
\begin{align}\label{FPSI_FEM:eq:Cfinaldefns}
\begin{split}
    \overline{C} &:= \min \Big\{ \varepsilon_2, K_3\rho_f, K_3\rho_p, K_3s_0, \rho_p \Delta t (1-70\varepsilon_1 \rho_p), \rho_f \Delta t (1-70\varepsilon_1 \rho_f), s_0 \Delta t (1-70\varepsilon_1 s_0),\\
    & \Delta t (1-70\varepsilon_1 K_2),2\nu_f - 70\varepsilon_1 \Delta t (2\nu_f + C_1)^2, \kappa - 70\varepsilon_1 \Delta t (\kappa + C_1 + \alpha C_2)^2, \\
    & \frac{2}{\beta} - 70\varepsilon_1 \Delta t (2C_T + \frac{1}{\beta})^2, 2\bar{\epsilon}-70\varepsilon_1 \Delta t (C_T + \bar{\epsilon})^2, \varepsilon_1 \Delta t \beta_2^2 \Big\}, \\
    C^* &:= \text{exp}\Bigg( \dfrac{N\Delta t}{\varepsilon_2} K_3d^{n}  \Bigg) = \text{exp}\Bigg( \left(\dfrac{N}{\varepsilon_2} +\dfrac{1}{\varepsilon_2}\right)(C_\eta + 70 \varepsilon_1 \Delta t^2 K_1) \Bigg).
    \end{split}
\end{align}

\end{proof}

	\section{Conclusions}\label{FPSI_joint:sec:Conclusion}

We have proposed a monolithic formulation for the a fluid-poroelastic interaction system involving the fully dynamic, two-field Biot model and the dynamic Stokes equations. Although posed as a monolithic problem, we have chosen the Lagrange multipliers in such a way to create a saddle point system which is favorable for domain decomposition. In this paper, we demonstrated the well-posedness and stability of both the semi-discrete and fully discrete formulations, adding a small stabilization term for the well-posedness of the semi-discrete system. The goal of this formulation is to facilitate domain decomposition. Future work will perform error analysis on the monolithic formulation in order to evaluate the domain decomposition method and develop a partitioned method for this saddle point system. Solving a Schur complement equation for particular variables will enable the decoupling of the fluid and poroelastic subdomains, allowing them to be independently updated in parallel at each time step.

\small 
\bibliography{DissertationBib_FinalCombined}
    \bibliographystyle{plainurl}

\end{document}